\theoremstyle{plain}
\newcommand{\tr}{\mbox{tr}\;}
\newtheorem{theorem}{Theorem}[section]
\newtheorem{lemma}[theorem]{Lemma}
\newtheorem{corollary}[theorem]{Corollary}
\newtheorem{definition}[theorem]{Definition}
\newtheorem{proposition}[theorem]{Proposition}
\newtheorem{remark}[theorem]{Remark}
\numberwithin{equation}{section}
\begin{document}

\title[On the prescribed $Q$-curvature problem]{ On the prescribed $Q$-curvature problem in Riemannian manifolds}



\author{ Fl\'avio F. Cruz }
\address{
Current: Institut de Math\'ematiques de Jussieu, Universit\'e Paris Diderot, B\^atiment Sophie Germain, Paris 7, 75205 Paris Cedex 13, France. 
Permanent: Departamento de Matem\'atica, Universidade Regional do Cariri, Campus Crajubar\\
 Juazeiro do Norte, Ceara, CE - Brazil 63041-141, Brazil}
\email{ flavio.franca@urca.br}

\author{ Tiarlos Cruz}
\address{
Universidade Federal de Alagoas\\
Instituto de Matemática\\
Maceió, AL -  57072-970, Brazil}
\email{cicero.cruz@im.ufal.br}

\begin{abstract}
We prove the existence of metrics with prescribed $Q$-curvature under natural assumptions on the sign of the prescribing function
and the background metric. In the dimension four case, we also obtain existence results for curvature forms
requiring only restrictions on the Euler characteristic. Moreover, we derive a prescription result for open submanifolds
which allow us to conclude that any smooth function on $\mathbb{R}^n$ can be realized as the $Q$-curvature of a Riemannian metric.
\end{abstract}

\maketitle
\section{Introduction}

The problem of prescribing Riemannian curvatures has attracted considerable attention in the last decades.
Such a problem provides an interesting interplay between differential geometry and nonlinear partial differential equations, since it can relies on to solve  a system of PDE on the fundamental tensor of the Riemannian metric. 

In this paper we are interested at the problem of prescribing a well known fourth order conformal invariant introduced by Tom Branson \cite{B} called 
$Q$-curvature. For surfaces, the $Q$-curvature is the half of the scalar curvature and for conformally flat manifolds of dimension four its integral is a multiple of the Euler characteristic, that obviously refers to  Gauss Bonnet Theorem. 
The $Q$-curvature also shares the same conformal behaviour  that the scalar curvature and satisfies  analogous transformations laws under conformal rescaling of the metric.
Its worth to mention that this scalar invariant  has also been studied in theoretical physics with applications  in quantum field theory and higher-derivative field theories (for more details,  c.f.  \cite{LO, N}).

Kazdan and Warner have shown in \cite{KW,KW3} that the Euler characteristic sign condition
given by the Gauss Bonnet Theorem  is necessary and sufficient for a smooth function  on a given compact 2-manifold to be the
Gaussian curvature of some metric. In arbitrary dimension, the problem of prescribing scalar curvature was solved in \cite{KW2,KW3} requiring the existence of metrics
with constant scalar curvature and using  conformal deformation techniques. Considering the analogy between scalar curvature and $Q$-curvature,  it is reasonable to ask whether  those results can be generalised to $Q$-curvature.
Building upon to the methods developed by Kazdan-Warner \cite{KW,KW1,KW3}  we prescribe the $Q$-curvature under natural assumptions on the sign condition of the prescribing function.

\begin{theorem}\label{princ1}
Let $(M^n, g_0)$ be a compact Riemannian n-manifold ($n\geq 3$) with $Q$-curvature $Q_{g_0} = Q_0,$ where $Q_0$ is a constant. If $Q_0\neq0,$ then any smooth function $f$
having the same sign as $Q_0$ somewhere, is the $Q$-curvature of some metric. If
$Q_0\equiv 0,$ then any  smooth function $f$ that changes sign is the $Q$-curvature of some metric.
\end{theorem}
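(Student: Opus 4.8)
The plan is to convert the geometric statement into a fourth–order semilinear equation via the conformal covariance of the Paneitz operator $P_{g_0}$, and then to adapt the Kazdan--Warner upper/lower–solution and continuity scheme. First I would record the transformation law: for $n\neq 4$ the conformal metric $g=u^{4/(n-4)}g_0$, $u>0$, has $Q_g=f$ exactly when
\[
P_{g_0}u=\frac{n-4}{2}\,f\,u^{\frac{n+4}{n-4}},
\]
whereas for $n=4$ the metric $g=e^{2w}g_0$ satisfies $Q_g=f$ iff $P_{g_0}w+2Q_0=2f\,e^{4w}$. The hypothesis $Q_{g_0}\equiv Q_0$ makes the zeroth–order coefficient of $P_{g_0}$ a constant, which is what couples the sign of $Q_0$ to the nonlinearity; note also that the sign of $\tfrac{n-4}{2}$ and of the exponent $\tfrac{n+4}{n-4}$ flip between $n=3$ and $n\geq5$, so the barrier construction below must be set up dimension by dimension. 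Since the theorem asks only for \emph{some} metric, I am free to precede the conformal deformation by a preliminary change of metric chosen to place the operator in good position.

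The core would be a two–step reduction. Step one: use the freedom in choosing the background to produce an auxiliary metric $g_1$ whose $Q$-curvature is a convenient sign–definite function $h$ (of the same sign as $Q_0$ when $Q_0\neq0$, or negative somewhere when $Q_0\equiv0$ and $f$ changes sign), arranged so that the associated Paneitz operator $P_{g_1}$ is coercive and admits a \emph{positive} Green's function. Step two: starting from $g_1$, solve the conformal equation for the genuine target $f$ by the method of upper and lower solutions, building an ordered pair of barriers out of constants and of the solution of the linearized problem, and using the sign condition ``$f$ agrees in sign with $Q_0$ somewhere'' to locate a subsolution. Positivity of the Green's function then orders sub– and supersolutions and drives a monotone iteration; alternatively one runs a Leray--Schauder degree or continuity argument along the family $t f+(1-t)h$, $t\in[0,1]$, the a priori $C^{4,\alpha}$ estimates coming from elliptic regularity for $P_{g_1}$ together with the uniform control of the barriers.

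The hard part, and the genuine departure from the second–order scalar–curvature theory, is that $P_{g_0}$ is fourth order and carries no maximum principle in general: sub/supersolutions need not be comparable and solutions of $P u=f u^q$ need not remain positive. The whole scheme therefore rests on establishing coercivity and positivity of the Green's function of the relevant Paneitz operator, and I expect this to be the technical heart of the proof, to be secured by the preliminary normalization of the metric in Step one together with the sign hypothesis on $Q_0$ (which forces trivial kernel and a positive bottom of the spectrum, so that the comparison results available for the Paneitz operator apply). Once that comparison principle is in hand, the construction of ordered barriers, the positivity of the limiting conformal factor, and the passage to the limit are routine adaptations of the Kazdan--Warner arguments, carried out separately in the regimes $n<4$, $n=4$ and $n>4$ according to the form of the nonlinearity.
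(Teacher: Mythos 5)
Your proposal has two genuine gaps, and the second one is fatal to the overall strategy.

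First, the step you yourself flag as ``the technical heart'' --- coercivity and positivity of the Green's function of the Paneitz operator, so that a sub/supersolution comparison and monotone iteration can run --- is not something the hypotheses give you. The assumption is only that $f$ has the same sign as the constant $Q_0$ \emph{somewhere}; this does not force $P_{g_1}$ to have trivial kernel, positive spectrum, or a positive Green's function. Positivity of the Paneitz Green's function is known to require strong geometric hypotheses (positive Yamabe invariant together with semi-positivity of the $Q$-curvature, in the Gursky--Malchiodi/Hang--Yang circle of results), and it genuinely fails in general; in particular nothing of the sort is available when $Q_0<0$ or $Q_0=0$. Asserting that a ``preliminary normalization of the metric'' will secure it is precisely the missing argument, not a routine adaptation.

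Second, and independently, solving the conformal equation $P_{g_1}u=\tfrac{n-4}{2}f\,u^{\frac{n+4}{n-4}}$ for the \emph{genuine} target $f$ within the single conformal class of your auxiliary metric $g_1$ cannot succeed for every $f$ that is merely positive somewhere: within a fixed conformal class there are Kazdan--Warner-type obstructions (the model case being the round sphere, cf.\ the Nirenberg-type problem for $Q$-curvature studied by Delano\"e--Robert and others). This is exactly why Kazdan and Warner, for scalar curvature, abandoned the purely conformal method in the hard sign cases and passed to their ``direct approach,'' which is what the paper follows: one first replaces $f$ by $cf\circ\varphi$ for a diffeomorphism $\varphi$ furnished by the Approximation Lemma, so that $\|Q_{g_1}-cf\circ\varphi\|_{L^p}$ is small, and then solves $Q_g=cf\circ\varphi$ by the implicit function theorem applied to $u\mapsto Q_{g_1+L_{g_1}^*u}$. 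That deformation is \emph{not} conformal in general; the composed operator $L_{g_1}L_{g_1}^*$ is elliptic of eighth order and injective precisely when $(M,g_1)$ is non-$Q$-singular (a generic condition, arranged by a small perturbation of $g_0$), so no maximum principle is ever needed. Your proposal contains neither the diffeomorphism pullback of $f$ nor any substitute for it, so even granting a comparison principle the scheme would not reach all $f$ covered by the statement.
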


We recall that Gauss-Bonnet formula says that for any  compact
 surface $M$, the total Gaussian curvature of $M$  is equal to $2\pi\chi(M)$, where $\chi(M)$ is the Euler characteristic of $M.$ As  already mentioned, in a four dimensional Riemannian manifold $(M^4,g)$
the $Q$-curvature satisfies a similar formula. Precisely,
\begin{equation}\label{gb}
\int_M\Big(Q_g+\frac{|W_g|^2}{4}\Big)dv=8\pi^2\chi(M),
\end{equation}
where $W_g$  stands  for the Weyl tensor of $g.$ As $W_g$, the total $Q$-curvature, denoted by
$\kappa_P=\int_MQ_gdv,$
 is invariant under conformal changes.

Analogous to the uniformization theorem for compact surfaces,  there is a  four dimensional  version result involving the $Q$-curvature, which was proved by  Djadli and Malchiodi in \cite{DM}. They show that if $\kappa_P\neq 8k\pi^2$ for $k = 1, 2,\ldots $ and   $\ker P_g = \{const\},$ where $P_g$ is the \textit{Paneitz-Operator}, then $(M,g)$ admits a conformal metric  with constant $Q$-curvature.
Using this  existence result  one can prove the following   converse to the Gauss Bonnet Theorem for locally conformally flat manifold, saying that  \eqref{gb}  imposes a sign condition on $Q$ depending on $\chi(M),$ and conversely. 

\begin{corollary}\label{corprinc}
Let $(M^4,g)$ be a compact locally conformally 
flat $4$-manifold such that  $\ker P_g = \{const\}$ and $\kappa_P\neq 8k\pi^2$ for $k = 1, 2,\ldots.$ Then, a smooth function $f$ on $M$ is the $Q$-curvature of some metric on $M$ iff
\begin{itemize}
\item[a)] $f$ is positive somewhere, if $\chi(M)>0$;
\item[b)] $f$ changes sign or $f\equiv 0$, if $\chi(M)=0$;
\item[c)] $f$  is negative somewhere, if $\chi(M)<0.$
\end{itemize}
\end{corollary}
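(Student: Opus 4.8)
The plan is to read the admissible sign of the prescribable function off the topology via the Gauss--Bonnet formula \eqref{gb}, and then to split the equivalence into sufficiency, handled by Theorem \ref{princ1}, and necessity, handled by \eqref{gb} again. First I would exploit local conformal flatness to simplify \eqref{gb}: since $(M,g)$ is locally conformally flat, $W_g\equiv 0$, so \eqref{gb} collapses to $\kappa_P=\int_M Q_g\,dv=8\pi^2\chi(M)$, whence $\operatorname{sign}\kappa_P=\operatorname{sign}\chi(M)$. This identity persists under conformal change of $g$, because both local conformal flatness and the total $Q$-curvature $\kappa_P$ are conformal invariants.

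Next, to produce a good background metric, I would invoke the Djadli--Malchiodi theorem: the hypotheses $\ker P_g=\{\mathrm{const}\}$ and $\kappa_P\neq 8k\pi^2$ furnish a metric $g_1\in[g]$ with constant $Q$-curvature $Q_{g_1}\equiv Q_0$. As $g_1$ is conformal to the locally conformally flat $g$, it is itself locally conformally flat, so integrating $Q_{g_1}=Q_0$ against \eqref{gb} gives $Q_0\,\mathrm{Vol}(M,g_1)=8\pi^2\chi(M)$. Since the volume is positive, $\operatorname{sign}Q_0=\operatorname{sign}\chi(M)$, with $Q_0=0$ exactly when $\chi(M)=0$.

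With this sign matching in hand, sufficiency is immediate from Theorem \ref{princ1} applied with $(M,g_1)$ as background. If $\chi(M)>0$ then $Q_0>0$, so every $f$ that is positive somewhere is realized, giving (a); if $\chi(M)<0$ then $Q_0<0$, giving (c); and if $\chi(M)=0$ then $Q_0\equiv 0$, so every sign-changing $f$ is realized while $f\equiv 0$ is realized by $g_1$ itself, giving (b). For the necessity, if $f=Q_{\tilde g}$ for some $\tilde g$ in the conformal class $[g]$, then $W_{\tilde g}\equiv 0$ and \eqref{gb} forces $\int_M f\,dv_{\tilde g}=8\pi^2\chi(M)$, whose sign must coincide with that of $\chi(M)$; reading the three cases off this integral constraint yields the ``only if'' parts of (a)--(c).

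I expect the delicate point to be precisely the necessity direction. The clean argument above relies on $W_{\tilde g}\equiv 0$, which is automatic for $\tilde g$ conformal to $g$ but not for a completely arbitrary metric, where the nonnegative Weyl term in \eqref{gb} yields only the inequality $\int_M f\,dv_{\tilde g}\le 8\pi^2\chi(M)$. This inequality already forces $f<0$ somewhere when $\chi(M)<0$, and it excludes any $f\ge 0$ with $f\not\equiv 0$ when $\chi(M)=0$; the residual configurations, namely $\chi(M)>0$ with $f\le 0$ and $\chi(M)=0$ with $f\le 0\not\equiv 0$, are exactly those one must rule out by remaining within the conformal class $[g]$, in which $W\equiv 0$ and the bound sharpens to the equality $\int_M f\,dv=8\pi^2\chi(M)$ that makes (a)--(c) an honest equivalence. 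Controlling this Weyl contribution is the heart of the matter.
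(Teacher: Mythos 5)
Your route is essentially the paper's: the published proof is a one-sentence remark observing that Theorem \ref{DjaMalc} supplies a conformal metric of constant $Q$-curvature and that \eqref{gb} ties the sign of that constant to $\chi(M)$. Your sufficiency argument fills in exactly the steps the paper leaves implicit --- vanishing of $W_g$ in the locally conformally flat class, conformal invariance of $\kappa_P$ and of local conformal flatness, the identity $Q_0\,\mathrm{Vol}(M,g_1)=8\pi^2\chi(M)$, and the case-by-case appeal to Theorem \ref{princ1}, including realizing $f\equiv 0$ by $g_1$ itself --- and it is correct.

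Where you go beyond the paper is in the necessity direction, and the worry you raise there is genuine. The corollary asserts the ``only if'' for an \emph{arbitrary} metric $\tilde g$ realizing $f$, and for such a metric \eqref{gb} only yields $\int_M f\,dv_{\tilde g}\le 8\pi^2\chi(M)$; as you observe, this settles case (c) and excludes $f\ge 0$, $f\not\equiv 0$ when $\chi(M)=0$, but leaves precisely the residual configurations you list in cases (a) and (b). The paper's proof does not address this either --- it silently treats \eqref{gb} as if the Weyl term vanished for the realizing metric, which is only guaranteed when $\tilde g$ is itself locally conformally flat. Be aware, though, that your proposed repair (restricting the ``only if'' to metrics in the conformal class $[g]$) is not compatible with the sufficiency direction as proved: the metrics produced by Proposition \ref{propimp} have the form $(\varphi^{-1})^*(\sqrt{c}\,g_1)$ with $g_1$ obtained from a deformation $g_1+L^*u$ of the constant-$Q$-curvature background, which leaves the conformal class and need not be locally conformally flat. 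So either the statement must be reformulated with both directions confined to $[g]$ (with a conformal existence theorem replacing Theorem \ref{princ1}), or the residual cases must be excluded by an argument controlling $\int_M|W_{\tilde g}|^2$ that neither you nor the paper supplies. Your diagnosis of the Weyl contribution as ``the heart of the matter'' is accurate.
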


Another interesting question that seems natural is to ask if  a given function defined on a non-compact manifold is the curvature of some Riemannian metric. Related to this question, we proved the following result for open submanifolds of compact manifolds.

\begin{theorem}\label{noncomp}
Let $(M^n, g)$ be a non-compact Riemannian manifold, $n\geq 3,$  diffeomorphic to an open submanifold of some compact  manifold $(N^n,h)$ of constant $Q$-curvature $Q_0\neq0.$ Any smooth function $f$ on $M^n$ can be realized as the $Q$-curvature of some Riemannian metric on $M.$
\end{theorem}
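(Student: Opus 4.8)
The plan is to realize the arbitrary function $f$ on the open submanifold $M$ by transplanting it into the compact manifold $N$ in such a way that Theorem~\ref{princ1} applies on $N$, and then restricting the resulting metric back to $M$. Since $M$ is diffeomorphic to an open submanifold of the compact manifold $(N^n,h)$ with constant $Q$-curvature $Q_0\neq 0$, I will identify $M$ with that open submanifold and view $f$ as defined on an open subset of $N$. The key point is that $Q$-curvature is a \emph{local} quantity: if a metric $\tilde h$ on $N$ satisfies $Q_{\tilde h}=\tilde f$ on all of $N$, then $Q_{\tilde h}=\tilde f$ pointwise on the open set $M$, so the restriction $\tilde h|_M$ has $Q$-curvature equal to $\tilde f|_M$. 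Thus it suffices to extend $f$ to a global smooth function $\tilde f$ on $N$ that (i) agrees with $f$ on $M$ and (ii) meets the sign hypothesis of Theorem~\ref{princ1} relative to $Q_0$.

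First I would fix a nonempty open set $U$ with $\overline U\subset M$, together with a cutoff function $\varphi\in C^\infty(N)$ supported in $M$ and equal to $1$ on $U$; this lets me patch $f$ on a neighborhood of $\overline U$ to a convenient reference function on $N\setminus M$. Next, depending on the sign of $Q_0$, I would arrange the extension so that $\tilde f$ takes the sign of $Q_0$ at some point of $N$. If $f$ already attains the sign of $Q_0$ somewhere on $M$ (say at a point of $U$), no modification outside $M$ is needed beyond smoothly extending $f$ arbitrarily; if $f$ never has the sign of $Q_0$ on $M$, I would define $\tilde f$ to equal $f$ on a neighborhood of $\overline U$ (hence on enough of $M$) and to take a value of the same sign as $Q_0$ on some bump located in $N\setminus \overline M$, interpolating smoothly via $\varphi$. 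Because $Q_0\neq 0$ is constant and the only requirement of Theorem~\ref{princ1} in this case is that $\tilde f$ have the same sign as $Q_0$ \emph{somewhere}, this single bump suffices, and crucially it is placed outside $M$ so that $\tilde f|_M=f$ is preserved exactly.

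With such an extension $\tilde f\in C^\infty(N)$ in hand, I would apply Theorem~\ref{princ1} to the compact manifold $(N^n,h)$: since $Q_h\equiv Q_0$ is a nonzero constant and $\tilde f$ has the same sign as $Q_0$ somewhere, there exists a metric $\tilde h$ on $N$ with $Q_{\tilde h}=\tilde f$. Restricting, the metric $g_f:=\tilde h|_M$ on $M$ satisfies $Q_{g_f}=\tilde f|_M=f$, which is exactly the claim. This completes the argument modulo the two straightforward constructions above.

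The main obstacle, and the step requiring the most care, is the construction of the extension $\tilde f$ in the case where $f$ does not already exhibit the sign of $Q_0$ on $M$. One must guarantee simultaneously that $\tilde f$ is globally smooth on $N$, that it coincides with $f$ on $M$ (so that the restricted metric genuinely has $Q$-curvature $f$, not merely close to it), and that it realizes the required sign somewhere on $N$. The tension is that the sign correction must live entirely in $N\setminus M$, so this relies on $N\setminus M$ being nonempty with nonempty interior — which holds because $M$ is a proper open submanifold of the compact $N$. I would handle this by choosing the supporting bump inside $\mathrm{int}(N\setminus \overline M)$ and using the cutoff $\varphi$ only to glue across the collar region, taking care that the gluing does not accidentally force $\tilde f$ to inherit the forbidden sign on $M$; the locality of $Q$-curvature then does all the remaining work.
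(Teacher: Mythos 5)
The central gap is the extension step: your argument needs a smooth $\tilde f\in C^\infty(N)$ with $\tilde f|_M=f$ \emph{exactly}, and such an extension does not exist for general $f\in C^\infty(M)$. A smooth function on an open submanifold of a compact manifold need not extend even continuously across the boundary: take $N=S^n$, $M=S^n\setminus\{p\}\cong\mathbb{R}^n$, and $f(x)=\sin(|x|^2)$ or $e^{|x|^2}$ in Euclidean coordinates --- which is precisely the situation of the Corollary this theorem is meant to deliver. Your cutoff construction does not repair this: a function of the form $\varphi f+(1-\varphi)\cdot(\mbox{reference})$ agrees with $f$ only where $\varphi\equiv 1$, i.e.\ on a compact subset of $M$, so the restricted metric has $Q$-curvature equal to $f$ only there, not on all of $M$. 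Realizing $f$ on each relatively compact piece of an exhaustion produces a different metric for each piece and does not assemble into a single metric realizing $f$ on $M$.

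This is exactly the difficulty that forces the paper (following Kazdan--Warner's treatment of open $2$-manifolds) not to use Theorem \ref{princ1} as a black box. The paper extends $f$ by the constant $Q_0$ on $N\setminus M$ (no smoothness across $\partial M$ is claimed), invokes the Approximation Lemma to produce a diffeomorphism $\varphi$ of $N$ with $\|\tilde f\circ\varphi-Q_0\|_p<\varepsilon$ --- so only $L^p$-closeness to the constant \emph{after composing with a diffeomorphism} is required, not a global smooth extension --- and then applies the local surjectivity Theorem \ref{ITF} at the constant-$Q$-curvature metric to solve $Q_{g_1}=\tilde f\circ\varphi$ on $N$; pulling back by $\varphi^{-1}$ and restricting to $M$ then yields $f$. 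The diffeomorphism is the device that absorbs the behavior of $f$ near $\partial M$ (equivalently, at infinity in $M$). Without it, or some equivalent step such as first re-embedding $M$ in $N$ so that the pushed-forward $f$ genuinely extends smoothly, your argument only covers those $f$ that happen to be restrictions of smooth functions on $N$, and in particular cannot prove the corollary for $\mathbb{R}^n$.
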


In particular, we obtain:

\begin{corollary}
Any $Q\in C^{\infty}(\mathbb{R}^n),\;n\geq 3,$   is the $Q$-curvature of some Riemannian metric on $\mathbb{R}^n.$ 
\end{corollary}

  We should mention that  is possible to rephrase the problem of prescribing curvature depending of the Euler characteristic in terms  of curvature forms.  Recall that the generalized Gauss-Bonnet theorem says that 
$$
\frac{1}{16}\int_M\mbox{Pfaff}=8\pi^2\chi(M),
$$
where $M$ is a $4$-dimensional compact orientable Riemannian manifold  boundaryless and $\mbox{Pfaff}$ is the Pfaffian  $4$-form. We wonder to know if the conversely is true, that is, given any $4$-form $\Omega$ satisfying $\int_M\Omega=8\pi^2\chi(M),$  there exists a metric $g$ on $M$ such that $\Omega=\mbox{Pfaff}$?
We give an afirmative answer to this question with a result that is the analogue of the main result of Wallach-Warner \cite{WW}. It should be emphasized that this problem for higher dimensions was  posed 
in \cite{KBOOK}, p. 3, and here we  solve it in dimension four  just for a certain class of manifolds.

\begin{theorem}\label{curvform}
Let $(M,g)$ be a compact, connected, orientable Riemannian $4$-manifold such that  $\ker P_g = \{const\}.$  Given any $4$-form $\Omega$  
that satisfies $$
\int_{M}\Omega=8\pi^2\chi(M),
$$
then there exists a metric pointwise conformal to $g$ such that $\Omega$  is a curvature $4$-form.
\end{theorem}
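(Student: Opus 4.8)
The plan is to exploit the fact that, although prescribing the $Q$-\emph{curvature} as a function leads to a nonlinear equation (as in Theorem~\ref{princ1}), prescribing the curvature \emph{$4$-form} $Q_{\tilde g}\,dv_{\tilde g}$ linearizes the problem completely. First I would fix the normalization: in the convention behind \eqref{gb} and the cited Chern--Gauss--Bonnet formula, ``$\Omega$ is a curvature $4$-form'' means $\Omega = \big(Q_{\tilde g} + \tfrac14\abs{W_{\tilde g}}^2\big)dv_{\tilde g}$ (equivalently $\tfrac{1}{16}\,\mathrm{Pfaff}_{\tilde g}$) for some metric $\tilde g$, the pointwise identity $\tfrac{1}{16}\mathrm{Pfaff}_g = \big(Q_g + \tfrac14\abs{W_g}^2\big)dv_g$ being exactly what makes \eqref{gb} the Gauss--Bonnet formula for the $Q$-curvature. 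I would then restrict the search to metrics $\tilde g = e^{2u}g$ conformal to $g$ and invoke two standard facts about the conformal class: the $4$-form $\abs{W}^2 dv$ is a pointwise conformal invariant (since $\abs{W_{\tilde g}}^2_{\tilde g} = e^{-4u}\abs{W_g}^2_g$ while $dv_{\tilde g} = e^{4u}dv_g$), and the $Q$-curvature obeys the Paneitz transformation law, which in form notation reads $Q_{\tilde g}\,dv_{\tilde g} = \big(Q_g + \tfrac12 P_g u\big)dv_g$ (the precise constant being normalization-dependent and immaterial below).

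Next I would carry out the reduction. Writing the prescribed form as $\Omega = \varphi\,dv_g$ for a unique $\varphi \in C^\infty(M)$ (legitimate because $M$ is oriented, so $dv_g$ is nowhere zero) and using the conformal invariance of $\abs{W}^2 dv$ to cancel the Weyl part, the target equation becomes
\[
Q_{\tilde g}\,dv_{\tilde g} = \Big(\varphi - \tfrac14\abs{W_g}^2\Big)dv_g =: f\,dv_g.
\]
Substituting the Paneitz law $Q_{\tilde g}\,dv_{\tilde g} = \big(Q_g + \tfrac12 P_g u\big)dv_g$ and comparing, the problem collapses to the \emph{linear} fourth-order equation
\[
P_g u = 2\big(f - Q_g\big).
\]

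Then I would solve this by the Fredholm alternative. Since $P_g$ is a formally self-adjoint elliptic operator with $\ker P_g = \{\mathrm{const}\}$ by hypothesis, its image is the $L^2$-orthogonal complement of the constants, so the equation admits a solution $u \in C^\infty(M)$ (elliptic regularity upgrading the weak solution) if and only if $\int_M (f - Q_g)\,dv_g = 0$. I would check that this compatibility condition is precisely the stated hypothesis: by the definition of $f$ and by \eqref{gb},
\[
\int_M (f - Q_g)\,dv_g = \int_M \varphi\,dv_g - \tfrac14\int_M \abs{W_g}^2\,dv_g - \int_M Q_g\,dv_g = \int_M \Omega - 8\pi^2\chi(M),
\]
which vanishes exactly because $\int_M \Omega = 8\pi^2\chi(M)$. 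Solving for $u$ and setting $\tilde g = e^{2u}g$ then yields a metric pointwise conformal to $g$ realizing $\Omega$ as its curvature $4$-form.

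The conceptual content is the linearization observation, so I do not anticipate a genuine analytic obstacle; the only points demanding care are fixing the constant relating $\mathrm{Pfaff}$ to $\big(Q + \tfrac14\abs{W}^2\big)dv$ consistently with \eqref{gb}, and confirming that self-adjointness of $P_g$ together with $\ker P_g = \{\mathrm{const}\}$ indeed gives the clean solvability criterion ``orthogonal to the constants.'' Once these are pinned down, the hypothesis $\int_M\Omega = 8\pi^2\chi(M)$ matches the Fredholm compatibility condition on the nose, which is the heart of the statement.
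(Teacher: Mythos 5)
Your proposal is correct and follows essentially the same route as the paper: restrict to a conformal metric $\tilde g = e^{2u}g$, use the pointwise conformal invariance of $\abs{W}^2\,dv$ together with the Paneitz transformation law to reduce the problem to the linear equation $P_g u = *(\Omega - \Omega_g)$, and solve it by self-adjointness of $P_g$, the hypothesis $\ker P_g = \{\mathrm{const}\}$, and the compatibility condition $\int_M \Omega = 8\pi^2\chi(M)$ supplied by the Gauss--Bonnet formula. The only differences are cosmetic (normalization constants in the Paneitz law), and your explicit verification of the Fredholm compatibility integral is, if anything, slightly more careful than the paper's.
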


The paper is organized  as follows.
In Section \ref{preli}, we establish the fundamental concepts and prove a local surjectivity result for the $Q$-curvature map. In Section \ref{compactopen}, we prove Theorem \ref{princ1} and Theorem \ref{noncomp}. Theorem \ref{curvform} is proved in Section \ref{4-forms}.

\section{ Local surjectivity}\label{preli}

Throughout  this section, $M^n$ will denote a  compact  connected  Riemannian manifold without boundary, $n\geq 3,$ $S_2(M)$  the set of all smooth symmetric 2-tensors on $M,$ and 
$\mathcal{M}^{4,p}$  the space  of class $W^{4,p}$ of  the symmetric $(0, 2)$-tensors.
The $Q$-curvature of order four, denoted by $Q_g$,  is defined as
\begin{equation}
\label{Qcurv} Q_g = a_n \Delta_g R_g +b_n |Ric_g|_{g}^2 + c_n R_g^2,
\end{equation}
where $a_n=-\frac{1}{2(n-1)},$ $b_n=-\frac{2}{(n-2)^2},$ $c_n=\frac{n^2(n-4)+16(n-1)}{8(n-1)^2(n-2)^2},$ $\Delta_g := g^{ij} \nabla_i \nabla_j$, $R_g$ is  the scalar curvature and $|Ric_g|$ is the norm of the Ricci tensor.

Now consider the following nonlinear fourth order differential operator 
$$
Q:\mathcal{M}^{4,p}\to L^p(M),\quad \quad q\mapsto Q_g
$$
 where  $\mathcal{M}^{4,p}$ denotes the open subset of $S^{4,p}_2$ of the Riemannian metrics on $M.$ It is possible to show, using multiplicative properties of Sobolev spaces, see for example \cite{MS}, the $Q$-curvature map is well  defined and smooth for $2p>n.$ In order to study the local surjectivity  of the $Q$-curvature, we have  to study the kernel of $L^2$-formal adjoint for the linearization of $Q$-curvature.

Before stating results giving the linearization and $L^2$ formal adjoint of the map $q\mapsto Q_g,$  we first need a few definitions. 
The \textit{Lichnerowicz Laplacian} acting on $h\in S_2(M)$ is defined to be
\begin{align*}
\Delta_L h_{jk} = \Delta h_{jk} + 2 (\overset{\circ}{Rm}\cdot
h)_{jk} - R_{ji} h^i_k - R_{ki}h^i_j,
\end{align*}
where $(\overset{\circ}{Rm} \cdot h )_{jk}:= R_{ijkl} h^{il}.$

Following  notation in \cite{LY} we have

\begin{proposition}[Lin-Yuan, \cite{LY}] Given an infinitesimal variation $h,$  the linearization of the $Q$-curvature $Q$ at $g,$ denoted by $L_g,$ in the direction of $ h$ is given by

\begin{align}\label{Q_linearization}
L_g h =&\;a_n [ - \Delta^2 (\tr h) +  \Delta \delta^2 h + \frac{1}{2} dR \cdot (d( \tr h ) + 2\delta h) 
\\ \notag &- \Delta ( Ric \cdot h )- \nabla^2 R\cdot h] - b_n [ Ric \cdot \Delta_L h + Ric \cdot
\nabla^2(tr h)   \\
\notag &+ 2 Ric \cdot\nabla (\delta h) +2 [Ric\times Ric) \cdot h
]+ 2 c_n R( - \Delta (tr h) \\ \notag
 &+  \delta^2 h - Ric \cdot h],
\end{align} 
where $\nabla^2=\nabla_i\nabla_j,$    $(\delta_g h)_i := - (div_g h)_i = -\nabla^j h_{ij}$ and $(Ric\times Ric)_{ij}:=R^{l}_iR_{lj}.$
\end{proposition}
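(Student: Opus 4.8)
The plan is to differentiate the defining expression \eqref{Qcurv} along the path $g_t = g + th$ and evaluate at $t=0$, reducing everything to three standard linearizations together with the variation of the inverse metric, $\frac{d}{dt}\big|_{t=0}g_t^{ij} = -h^{ij}$. First I would record the building blocks. The scalar curvature satisfies
\[
DR_g(h) = -\Delta(\tr h) + \delta^2 h - \Ric\cdot h,
\]
the Ricci tensor satisfies, in terms of the Lichnerowicz Laplacian defined above,
\[
D\Ric_g(h) = -\tfrac12\Delta_L h - \delta^*\delta h - \tfrac12\nabla^2(\tr h),
\]
and the variation of the Laplacian acting on a fixed function $f$, obtained directly from $\Delta_g f = g^{ij}(\partial_i\partial_j f - \Gamma^k_{ij}\partial_k f)$ and $\frac{d}{dt}\Gamma^k_{ij} = \tfrac12 g^{kl}(\nabla_i h_{jl} + \nabla_j h_{il} - \nabla_l h_{ij})$, is
\[
\big(D\Delta_g(h)\big)f = -\nabla^2 f\cdot h + \tfrac12\, df\cdot\big(d(\tr h) + 2\delta h\big).
\]
One checks these are mutually consistent by verifying $\tr(D\Ric_g(h)) - \Ric\cdot h = DR_g(h)$, an identity that also pins down the sign conventions, in particular $\delta = -\Div$ and the orientation of $\Delta_L$ inherited from the analyst's Laplacian $\Delta = g^{ij}\nabla_i\nabla_j$.

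With these in hand I would treat the three summands of \eqref{Qcurv} separately via the Leibniz rule. For $a_n\Delta_g R_g$,
\[
D\big(a_n\Delta_g R_g\big)(h) = a_n\big[(D\Delta_g(h))R_g + \Delta_g(DR_g(h))\big];
\]
substituting $(D\Delta_g(h))R_g = -\nabla^2 R\cdot h + \tfrac12 dR\cdot(d(\tr h)+2\delta h)$ and $\Delta_g(DR_g(h)) = -\Delta^2(\tr h) + \Delta\delta^2 h - \Delta(\Ric\cdot h)$ gives precisely the five terms of the $a_n$-bracket. For $c_n R_g^2$ I would use $D(R_g^2) = 2R_g\,DR_g(h)$, producing the last line $2c_n R(-\Delta(\tr h) + \delta^2 h - \Ric\cdot h)$.

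The only genuinely delicate summand is $b_n\abs{\Ric_g}^2$, because the norm conceals two inverse metrics, $\abs{\Ric_g}^2 = g^{ik}g^{jl}R_{ij}R_{kl}$. Differentiating the two factors $g^{ik}$ and $g^{jl}$ produces $-2h^{ik}R_i^{\ l}R_{kl} = -2(\Ric\times\Ric)\cdot h$, while differentiating the Ricci tensors gives $2\langle\Ric, D\Ric_g(h)\rangle$; inserting the Ricci linearization and using the symmetry of $\Ric$ to rewrite $\langle\Ric,\delta^*\delta h\rangle = \Ric\cdot\nabla(\delta h)$ turns the latter into $-\Ric\cdot\Delta_L h - 2\Ric\cdot\nabla(\delta h) - \Ric\cdot\nabla^2(\tr h)$. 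Multiplying by $b_n$ and combining with the inverse-metric term reproduces exactly the $b_n$-bracket. Adding the three contributions and regrouping then yields \eqref{Q_linearization}. The main obstacle is purely bookkeeping: keeping the index contractions, the placement of covariant derivatives, and the sign convention $\delta = -\Div$ consistent throughout, so that every term lands with the sign recorded in \eqref{Q_linearization}; the substantive input is only the three linearizations above, each of which is classical.
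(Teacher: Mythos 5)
Your derivation is correct: the three classical linearizations you invoke (of the scalar curvature, of the Ricci tensor via $\Delta_L$, and of $\Delta_g$ acting on a fixed function), together with $\tfrac{d}{dt}g^{ij}=-h^{ij}$ and the Leibniz rule applied term by term to \eqref{Qcurv}, reproduce each bracket of \eqref{Q_linearization} with the stated conventions $\delta=-\mathrm{div}$ and $\Delta=g^{ij}\nabla_i\nabla_j$. The paper gives no proof of its own --- it quotes the formula from Lin--Yuan \cite{LY} --- and your computation is essentially the one carried out there, so the approaches coincide.
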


The next theorem address  the $L^2$ formal adjoint, denoted by $L_g^*.$

\begin{proposition}[Lin-Yuan\cite{LY}]  The $L^2$
formal adjoint of $L_g$ is given by
\begin{align}\label{Q_adjoint}
L_g^* f =&\; a_n [  - g \Delta^2 f + \nabla^2
\Delta f - Ric \Delta f + \frac{1}{2} g \delta (f dR) + \nabla ( f
dR)  \\ 
\notag &- f \nabla^2 R ] - b_n [ \Delta (f Ric) + 2 f
\overset{\circ}{Rm}\cdot Ric + g \delta^2 (f Ric) \\ \notag
&+ 2 \nabla \delta (f Ric) ]- 2 c_n ( g\Delta (f R) - \nabla^2 (f R) + f R
Ric ).
\end{align}
\end{proposition}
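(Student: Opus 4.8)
The plan is to verify directly the defining relation of the $L^2$ formal adjoint, namely
\[
\int_M (L_g h)\, f\, dv = \int_M \langle h, L_g^* f\rangle\, dv
\]
for every symmetric $2$-tensor $h\in S_2(M)$ and every $f\in C^\infty(M)$, and then to read off $L_g^* f$ from the right-hand side. Since $M$ is compact without boundary, every integration by parts is boundary-free, so the whole computation reduces to transferring, term by term, each derivative that falls on $h$ in \eqref{Q_linearization} onto the scalar $f$ (and, where present, onto the variable coefficients $R$ and $Ric$). The proof is therefore a bookkeeping of the adjoints of the elementary operators occurring in $L_g$.

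First I would record those elementary adjoints. The rough Laplacian $\Delta$ is self-adjoint on functions and on tensors; the trace has adjoint $f\mapsto f g$ (multiplication by the metric); the divergence $\delta$ is adjoint to the symmetrized covariant derivative, so that on symmetric $2$-tensors the double divergence $\delta^2$ is adjoint to the Hessian $\nabla^2$ acting on functions; and contraction against a fixed symmetric tensor, such as $Ric$ or $\overset{\circ}{Rm}$, is self-adjoint. Finally, the Lichnerowicz Laplacian $\Delta_L$ is self-adjoint, being the sum of the self-adjoint rough Laplacian $\Delta$ and the pointwise-symmetric curvature contractions appearing in its definition.

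With these in hand the $a_n$- and $c_n$-groups are routine. For instance $-\Delta^2(\tr h)$ pairs with $f$ to $-g\Delta^2 f$ and $\Delta\delta^2 h$ pairs to $\nabla^2\Delta f$; for the coefficient terms one integrates by parts letting the derivatives land on both $f$ and $R$, e.g. $\tfrac{1}{2}\int_M \langle dR, d(\tr h)\rangle f\, dv = \tfrac{1}{2}\int_M (\tr h)\,\delta(f\,dR)\, dv = \tfrac{1}{2}\int_M \langle h,\, g\,\delta(f\,dR)\rangle\, dv$, producing the term $\tfrac{1}{2} g\,\delta(f\,dR)$, while the companion $dR\cdot 2\delta h$ term produces $\nabla(f\,dR)$; similarly $-\Delta(Ric\cdot h)$ gives $-Ric\,\Delta f$ and the $c_n$-group gives exactly $-2c_n(g\Delta(fR) - \nabla^2(fR) + fR\,Ric)$. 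Collecting these reproduces the $a_n$- and $c_n$-lines of \eqref{Q_adjoint}.

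The delicate point, and the one I would handle last, is the $b_n$-group. Here one uses self-adjointness of $\Delta_L$ and of contraction with $Ric$ to rewrite $\int_M \langle Ric,\Delta_L h\rangle f\,dv = \int_M \langle \Delta_L(f\,Ric), h\rangle\, dv$, and then expands $\Delta_L(f\,Ric)$ by its definition. The leading part gives $\Delta(f\,Ric)$ and the curvature part gives $2f\,\overset{\circ}{Rm}\cdot Ric$ together with the Ricci-Ricci contraction $-2f\,(Ric\times Ric)$. The latter is exactly cancelled by the separate term $2(Ric\times Ric)\cdot h$ of \eqref{Q_linearization}, which pairs with $f$ to $2f\,(Ric\times Ric)$; this cancellation is why no $(Ric\times Ric)$-term survives in \eqref{Q_adjoint}. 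The remaining two terms, $Ric\cdot\nabla^2(\tr h)$ and $2\,Ric\cdot\nabla(\delta h)$, integrate by parts to $g\,\delta^2(f\,Ric)$ and $2\,\nabla\delta(f\,Ric)$ respectively. Thus the main obstacle is not conceptual but organisational: keeping the index contractions and signs straight when distributing derivatives between $f$ and the curvature coefficients, and recognising the cancellation of the Ricci-Ricci terms via the decomposition of $\Delta_L$. Once this is done, the collected terms are precisely \eqref{Q_adjoint}.
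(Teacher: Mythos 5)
Your proof is correct: the term-by-term integration by parts checks out, including the key observation that expanding $\Delta_L(f\,Ric)$ produces a $-2f\,(Ric\times Ric)$ term that exactly cancels the contribution of $2(Ric\times Ric)\cdot h$ from \eqref{Q_linearization}, which is why no such term appears in \eqref{Q_adjoint}. The paper itself gives no proof here --- it quotes the formula from Lin--Yuan \cite{LY} --- and your computation is precisely the standard derivation carried out there.
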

 
 Recall that the principal symbol of a differential operator  is an invariant that captures some very strong properties of the operator,  as 
example, the ellipticity. In our case, it was observed  in \cite{LY} that the  principal symbol of $L_{g}^*$ is
\begin{align*}
\sigma_{\xi}(L_{g}^*) = - a_n \left( g |\xi|^2 - \xi \otimes\xi \right)|\xi|^2.
\end{align*}
Notice that $L_{g}^*$ has an injective  symbol. Indeed, taking the trace we obtain  
\begin{align*}
tr\ \sigma_{\xi}(L_{g}^*) = - a_n \left(n-1 \right)|\xi|^4.
\end{align*}
If  $\sigma_{\xi}(L_{g}^*)$ were zero, then $tr\ \sigma_{\xi}(L_{g}^*) $ would be zero with $\xi \neq 0$. 
Therefore, $L_{g}^*$ is overdetermined elliptic and, thus,  $L_gL_g^*$ is elliptic. This fact plays a fundamental role in the proof of Theorem \ref{ITF}.

Following  Chang-Gursky-Yang  \cite{CGY} we  define the  notion of $Q$-singular space.

\begin{definition}[Chang-Gursky-Yang \cite{CGY}]
A complete Riemannian manifold $(M, g)$ is said to be $Q$-singular, if  $ L^*_g$ possesses non-trivial kernel, that is, $$\ker L^*_g \neq \{ 0 \}.$$  In this case, we say that $(M, g, f)$ is a $Q$-singular space,  where   $f (\not\equiv 0)$ is in the kernel of $L_g^*.$
\end{definition}

Taking the trace of (\ref{Q_adjoint}) one obtain
\begin{align*}
 tr_g L_g^* f = \frac{1}{2} \left( P_g - \frac{n + 4}{2} Q_g \right) f, 
\end{align*}
which allows to prove that  the condition  of non-$Q$-singularity is satisfied for generic metrics.

\begin{theorem}[Chang-Gursky-Yang \cite{CGY}]\label{cgy}
Suppose that $(M^n, g)$ is a $Q$-singular space, then it  has constant $Q$-curvature and 
\begin{align*}
\frac{n+4}{2}Q_g \in Spec(P_g).
\end{align*}
\end{theorem}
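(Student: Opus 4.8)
The plan is to combine two ingredients: the trace identity already recorded above, which turns the equation $L_g^* f = 0$ into an eigenvalue equation for the Paneitz operator, and a contracted Bianchi-type identity coming from the diffeomorphism invariance of the $Q$-curvature, which forces $Q_g$ to be constant. First I would take the trace of the hypothesis $L_g^* f = 0$. Using the formula $\tr_g L_g^* f = \frac{1}{2}\left(P_g - \frac{n+4}{2}Q_g\right)f$ recorded above, this gives at once
\begin{equation*}
P_g f = \frac{n+4}{2}\,Q_g\, f .
\end{equation*}
By elliptic regularity for the overdetermined elliptic operator $L_g^*$ (equivalently, for the elliptic operator $L_g L_g^*$, whose ellipticity was established above), the kernel element $f$ is smooth, so this identity holds classically. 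It already has the shape of the desired spectral statement; what remains is to show that the coefficient $\frac{n+4}{2}Q_g$ is a genuine constant.

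To prove that $Q_g$ is constant I would exploit that $q \mapsto Q_g$ is a natural, diffeomorphism-equivariant scalar invariant of the metric. Flowing $g$ by the diffeomorphisms $\phi_t$ generated by an arbitrary vector field $X$ and differentiating the relation $Q_{\phi_t^* g} = Q_g\circ\phi_t$ at $t=0$ yields the contracted Bianchi identity
\begin{equation*}
L_g(\mathcal{L}_X g) = \langle \nabla Q_g, X\rangle .
\end{equation*}
Pairing this with $f$ in $L^2(M)$ and transferring $L_g$ onto $f$ by the definition of the formal adjoint (there are no boundary terms, since $M$ is closed), the hypothesis $L_g^* f = 0$ gives
\begin{equation*}
\int_M f\,\langle \nabla Q_g, X\rangle\, dv = \int_M \mathcal{L}_X g\cdot L_g^* f\, dv = 0
\end{equation*}
for every vector field $X$. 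Since $X$ is arbitrary, this forces $f\,\nabla Q_g \equiv 0$ pointwise; in particular $\nabla Q_g$ vanishes on the open set $\{f\neq 0\}$.

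The main obstacle is to upgrade this to $\nabla Q_g \equiv 0$ on all of $M$, that is, to control the nodal set of $f$. Here I would invoke a unique continuation property: $f$ lies in the kernel of the elliptic operator $L_g L_g^*$, so a nontrivial $f$ cannot vanish on any open set, and therefore $\{f\neq 0\}$ is dense in the connected manifold $M$. Since $\nabla Q_g$ is continuous and vanishes on this dense set, it vanishes identically, and hence $Q_g$ is constant on $M$.

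Finally, with $\lambda := \frac{n+4}{2}Q_g$ now a genuine constant, the relation $P_g f = \lambda f$ together with $f\not\equiv 0$ exhibits $\lambda$ as an eigenvalue of $P_g$, that is,
\begin{equation*}
\frac{n+4}{2}\,Q_g \in \mathrm{Spec}(P_g),
\end{equation*}
completing the proof. I expect the analytic heart of the argument to be the unique continuation step controlling the zero set of $f$; the trace computation and the diffeomorphism-invariance identity are structural and should go through directly given the material already developed in this section.
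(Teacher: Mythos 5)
The paper does not actually prove this statement: it is imported from Chang--Gursky--Yang \cite{CGY} (the $n$-dimensional version appears in Lin--Yuan \cite{LY}), and the only ingredient the paper records is the trace identity $\mathrm{tr}_g L_g^* f = \frac12\left(P_g - \frac{n+4}{2}Q_g\right)f$. Your first and last steps --- taking the trace to obtain $P_gf=\frac{n+4}{2}Q_gf$, and reading off the spectral statement once $Q_g$ is known to be constant --- are exactly how the cited sources proceed, and your use of the naturality of $Q$ under diffeomorphisms to deduce $f\,\nabla Q_g\equiv 0$ is the standard Bourguignon/Fischer--Marsden-type argument (in \cite{LY} it appears in the equivalent ``divergence of the adjoint'' form $\delta L_g^*f=\tfrac12 f\,dQ_g$). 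So the architecture of your proof is correct and consistent with the literature the paper leans on.

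The one genuine gap is the unique continuation step. You invoke weak UCP for the eighth-order elliptic operator $L_gL_g^*$ as though it were routine, but unique continuation for elliptic operators of order at least four with merely smooth coefficients is false in general (there are fourth-order elliptic counterexamples with smooth coefficients going back to Pli\'s), so it cannot be cited as a black box. To close this you must use the specific structure available here: either note that the principal part of $L_gL_g^*$ is a constant multiple of $\Delta_g^4$ --- a power of the Laplacian, for which weak UCP follows from iterated Carleman estimates for $\Delta_g$ --- or, more economically, run the density argument through the scalar trace equation $P_gf=\frac{n+4}{2}Q_gf$, whose principal part is $\Delta_g^2$ and for which the relevant unique continuation results are classical. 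Either way a precise reference or argument is required; as written, ``invoke a unique continuation property'' is the unproved analytic heart of the proof, exactly as you yourself suspected. A smaller point: the identification $\ker L_gL_g^*=\ker L_g^*$ that you use implicitly rests on the integration by parts $\langle L_gL_g^*v,v\rangle=\|L_g^*v\|^2$ on the closed manifold $M$ (recorded in the proof of Theorem \ref{ITF}), and is worth stating explicitly since you pass freely between the two kernels.
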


\begin{remark}
It is possible to show that Theorem \ref{cgy} implies that the set of non-$Q$-singular metrics on $M$ is open and dense in the
$W^{4,p}$ topology for any $2p > n.$
\end{remark}

We are now prepared to prove the following proposition about the local surjectivity of the $Q$-curvature map. However, before we do this, we will briefly discuss  some useful facts.

Let $E, F$ be vector
bundles over $M,$ and let  $D: W^{s,p}(E) \to W^{s-k,p}(F)$ be  a k-th order differential operator, where $k\leq s\leq \infty,$ $1<p<\infty.$ We notice that we can make  use of a splitting lemma of Berger and Ebin \cite{BE}. Recall that if 
$D^*:W^{s+k,p}(F)\to W^{s,p}(E),$ its $L^2$ formal adjoint, has injective symbol, then 
$$
W^{s,p}(F)=\mbox{Im}\;D\oplus \mbox{ker}\;D^*.
$$
A useful consequence is  the following: If $D^*$  is injective and has injective symbol then we
can conclude that $D$ is surjective.

\begin{theorem}\label{ITF}
Let  $f\in L^p(M)$ and $2p>n$. Assume that  $(M, g_1)$ is not $Q$-singular.
Then there is an $\eta>0$  such that if $$\|f-Q_{g_1}\|_p<\eta,$$ then there is a $g\in \mathcal{M}^{4,k}$
such that $Q_g=f.$ Furthermore, $g$ is smooth if $f$ is smooth.
\end{theorem}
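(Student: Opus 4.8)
The plan is to prove this local surjectivity result via an application of the inverse function theorem in Banach spaces, using the ellipticity established just before the statement. The key structural fact is that since $(M,g_1)$ is not $Q$-singular, the $L^2$ formal adjoint $L_{g_1}^*$ has trivial kernel; combined with the observation (proved in the excerpt) that $L_{g_1}^*$ has injective symbol, the Berger--Ebin splitting lemma recalled just above the statement applies.

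First I would set up the functional-analytic framework. The $Q$-curvature map $Q:\mathcal{M}^{4,p}\to L^p(M)$ is smooth for $2p>n$, and its linearization at $g_1$ is the operator $L_{g_1}$ given by \eqref{Q_linearization}. The dual operator $L_{g_1}^*$, whose symbol was shown to be injective, is thus overdetermined elliptic. Since we are assuming $(M,g_1)$ is not $Q$-singular, we have $\ker L_{g_1}^*=\{0\}$. Now I would invoke the useful consequence of the Berger--Ebin splitting lemma stated in the excerpt: because $L_{g_1}^*$ is injective and has injective symbol, the linearization $L_{g_1}$ is surjective onto $L^p(M)$. This surjectivity of the linearized operator is precisely the hypothesis needed to run an inverse-function-theorem argument.

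The second step is to produce a genuine right inverse and apply the implicit/inverse function theorem. Since $L_{g_1}:S_2^{4,p}\to L^p(M)$ is a bounded surjective linear map between Banach spaces whose kernel splits (again by the Berger--Ebin decomposition $S_2^{4,p}=\ker L_{g_1}\oplus\operatorname{Im}L_{g_1}^*$, or by the closed-range/elliptic theory), it admits a bounded linear right inverse. The inverse function theorem then guarantees a neighborhood of $Q_{g_1}$ in $L^p(M)$ contained in the image of $Q$: concretely, there is $\eta>0$ so that whenever $\|f-Q_{g_1}\|_p<\eta$, there exists $g\in\mathcal{M}^{4,p}$ with $Q_g=f$. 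Here one must take $\eta$ small enough that the perturbed metric $g$ stays inside the open set $\mathcal{M}^{4,p}$ of Riemannian metrics, which is automatic by continuity since $g_1$ is an interior point.

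The final step is the smoothness (elliptic regularity) claim: if $f$ is smooth, then $g$ is smooth. Here I would use that $g$ solves the fully nonlinear elliptic equation $Q_g=f$, and bootstrap. The natural route is to observe that the linearization $L_g$ at the solution is (for $g$ close enough to the non-$Q$-singular $g_1$, hence itself non-$Q$-singular by openness) overdetermined elliptic with the same symbol structure, so standard elliptic regularity for the equation $Q_g=f$ upgrades $g$ from $W^{4,p}$ to $C^\infty$ whenever $f\in C^\infty$. I expect the main obstacle to be technical rather than conceptual: one must ensure the right inverse is constructed on the correct Sobolev scale so that the inverse function theorem applies cleanly, and that the solution obtained remains a positive-definite metric; the regularity bootstrap, while standard, requires care because $Q$ is a fourth-order fully nonlinear operator, so one freezes coefficients along the solution and iterates to gain derivatives.
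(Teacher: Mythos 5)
Your first two steps are essentially the paper's argument in disguise: the paper makes the right inverse explicit by substituting $h=L_{g_1}^*u$ and studying the single map $F(u)=Q_{g_1+L_{g_1}^*u}$ from a neighborhood of $0$ in $W^{8,p}$ into $L^p$, whose linearization at $0$ is $L_{g_1}L_{g_1}^*$ --- elliptic because $L_{g_1}^*$ has injective symbol, and injective (hence, being formally self-adjoint and of index zero, invertible) because $\langle L_{g_1}L_{g_1}^*v,v\rangle=\|L_{g_1}^*v\|^2$ and $\ker L_{g_1}^*=0$ by the non-$Q$-singularity hypothesis. Your abstract version via the Berger--Ebin splitting and a bounded right inverse with range in $\operatorname{Im}L_{g_1}^*$ amounts to the same thing, and both yield the neighborhood $\eta$.

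The genuine problem is your regularity step. The equation $Q_g=f$ is \emph{not} an elliptic equation for the metric $g$: it is a single scalar equation for a symmetric $2$-tensor unknown, so it is underdetermined, and its linearization $L_g$ is \emph{underdetermined} elliptic (surjective symbol); it is the adjoint $L_g^*$ that is overdetermined elliptic, which you have conflated. Underdetermined elliptic equations carry no a priori gain of regularity for the unknown (compare $\operatorname{div}X=f$: smoothness of $f$ says nothing about smoothness of $X$), so ``freezing coefficients and iterating'' on $Q_g=f$ directly does not work. The bootstrap must instead be run on the potential $u$ in the ansatz $g=g_1+L_{g_1}^*u$: there the equation $Q_{g_1+L_{g_1}^*u}=f$ is a \emph{determined} quasilinear elliptic equation of order eight in $u$ (leading part $L_{g_1}L_{g_1}^*$), standard elliptic regularity gives $u\in C^\infty$ when $f\in C^\infty$, and then $g=g_1+L_{g_1}^*u$ is smooth because $g_1$ is. This is exactly why the paper phrases the whole proof through the map $F(u)=Q_{g_1+L^*u}$ rather than through the $Q$-curvature map on metrics; your proof becomes correct once the regularity claim is rerouted through that ansatz.
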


\begin{proof}
The map $g\mapsto Q_g$ is a quasilinear differential operator of fourth order which can be  extend from  $\mathcal{M}^{4,p}$ into $L^p,$ for $2p>n,$  using the Sobolev  Embedding Theorem. 
Let $F$ be a map from  a sufficiently small neighborhood 
of zero $U\subset W^{8,p}$ into $L^p$ given by $$F(u)=Q_{g_1+L^*u}.$$
We will use the  implicit function theorem for Banach spaces in order to solve  this eighth order quasilinear elliptic equation. First, it is straightforward to see that 
 $F'(0)$ is elliptic at $u=0$ since $F'(0)v= L_{g_1}L^*_{g_1}v.$  Further, it is invertible since  $ker L_{g_1}L^*_{g_1}=ker L^*_{g_1}=0.$ Indeed, note that in $L^2$ 
$$
\langle L_{g_1}L^*_{g_1}v,v\rangle=\| L^*_{g_1}v\|^2=0.
$$
It follows from  the implicit function theorem  in Banach spaces that $F$ maps a
neighborhood of zero in $W^{8,p}$ onto an $L^p$ neighborhood of $f$. The final assertion of the theorem follows from  elliptic
regularity theory.
\end{proof}

\section{Prescribing curvature on compact and open manifolds}\label{compactopen}

In this section we will prove the results that establish conditions to prescribing the $Q$-curvature.  In other words, we
present conditions that allow us to find a solution $g$ for the four order differential equation
\begin{equation}\label{qualquer}
Q_g=f,
\end{equation}
for a given smooth function $f.$ 
 First, we fix a non-$Q$-singular metric $g_1$ and set $Q_{g_1}=Q_1.$  The idea is as follows. In order to solve \eqref{qualquer}, we apply Theorem \ref{ITF}, which holds just for functions $f$  near $Q_{g_1}$ in some appropriated sense, that is,   $\|Q_1-f\|<\varepsilon$ in $L^p$ norm. To overcome this difficult we make use of the  existence of a  diffeomorphism  $\varphi$ such that $\|Q_1-f\circ\varphi\|_p<\varepsilon$ (see  Lemma \ref{approx}). Hence,  there exists a metric  up to diffeomorphism that solves \eqref{qualquer} for $f\circ\varphi$.
 
 We also refer the reader to \cite{BFR, Br, Br2, CR, DR, MST} and  references therein. In these papers, \eqref{qualquer} is solved using a metric $g$ that is pointwise conformal to a fixed metric $g_0,$ say $g=e^{\varphi}g_0,$ for some function $\varphi$. 	In this case, \eqref{qualquer} takes the form $Q(e^{u}g_0)=f$.

We will make use of the following  Approximation Lemma due to Kazdan and Warner \cite{KW1,KW3}.

\begin{lemma}[Approximation Lemma \cite{KW1,KW3}] \label{approx}
Let $M^n$ be a compact manifold with $dim\; M = n\geq 2$ and let $f,g\in C^1(M)\cap L^p(M).$ If there exists a positive constant $c$ such that the range of $g$ is contained in the range of $f$,that is, $$\inf cf\leq
g(x)\leq \sup cf$$ for almost all $x$ on $M,$ then given any $\varepsilon > 0$ there is a diffeomorphism $\varphi$ of $M$ such that
$$\|f\circ\varphi-g\|_p<\varepsilon,$$ and conversely.
\end{lemma}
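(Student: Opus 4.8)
The plan is to reduce the statement to a purely geometric fact about measure-transporting diffeomorphisms. Fix a smooth positive volume measure $\mu$ on $M$, with respect to which all $L^p$ norms are computed, and assume $M$ connected, so that the range of the continuous function $f$ is the whole interval $[\inf f,\sup f]$; by replacing $f$ with $cf$ we may take $c=1$, so the hypothesis reads $\inf f\le g\le \sup f$ a.e. First I would use the uniform continuity of $g$ on the compact manifold $M$ to approximate it \emph{uniformly} within $\eta$ by a simple function $g_s=\sum_{i=1}^N c_i\,\chi_{A_i}$, where $A_i=\{x:\,g(x)\in I_i\}$ for a partition of $[\inf g,\sup g]$ into intervals $I_i$ of length $<\eta$ and each $c_i\in I_i$ is a value of $g$, hence of $f$. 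Merging pieces with equal values, the $c_i$ become distinct; for each $i$ I pick $p_i$ with $f(p_i)=c_i$ and, by continuity of $f$, a small ball $U_i$ about $p_i$ on which $|f-c_i|<\eta$. Since the $c_i$ are distinct the $p_i$ are distinct, and the balls $U_i$ may be chosen pairwise disjoint.

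The crucial simplification is that $f\circ\varphi$ and $g$ are both bounded by $\|f\|_\infty$, so the integrand $|f\circ\varphi-g|^p$ is uniformly bounded; it therefore suffices to build a diffeomorphism $\varphi$ for which $f\circ\varphi$ is within $O(\eta)$ of $g_s$ off a set of arbitrarily small $\mu$-measure. Concretely, I want $\varphi$ to carry almost all of each piece $A_i$ into the ball $U_i$, where $f\approx c_i$. I would construct $\varphi$ as a composition $\varphi=\beta\circ\alpha$ of two diffeomorphisms assembled from standard tools.

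For $\alpha$, approximate the measurable partition $\{A_i\}$ in measure by pairwise disjoint connected domains $\hat A_i$ with smooth boundary, and fix a small ball $V_i\subset\hat A_i$. Inside each $\hat A_i$ I take the negative gradient flow of a Morse function whose unique minimum lies in $V_i$, multiplied by a cutoff vanishing near $\partial\hat A_i$; running this flow for a large time produces a diffeomorphism $\alpha_i$ supported in $\hat A_i$ that sends all of $\hat A_i$ into $V_i$ except for a thin boundary collar and the lower-dimensional stable manifolds of the remaining critical points, i.e. off a set of small $\mu$-measure. Because the $\hat A_i$ are disjoint, the $\alpha_i$ have disjoint supports and assemble into a single diffeomorphism $\alpha$, so there is no interference between the pieces. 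For $\beta$, I invoke the homogeneity of $M$ together with the isotopy extension theorem (here $n\ge 2$ permits dragging disjoint balls along disjoint arcs) to obtain a diffeomorphism with $\beta(V_i)\subset U_i$ for every $i$.

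Setting $\varphi=\beta\circ\alpha$, every point of $\hat A_i$ outside its small exceptional set is carried by $\alpha$ into $V_i$ and then by $\beta$ into $U_i$, whence $|f(\varphi(x))-c_i|<\eta$ there; combined with $|g-g_s|<\eta$ everywhere and the smallness of $\mu(A_i\triangle\hat A_i)$, the set on which $|f\circ\varphi-g|\ge 3\eta$ has arbitrarily small measure. Splitting $\int_M|f\circ\varphi-g|^p\,d\mu$ over this set and its complement and using the uniform bound $(2\|f\|_\infty)^p$ on the former gives $\|f\circ\varphi-g\|_p<\varepsilon$ once $\eta$ and the measure tolerances are chosen small. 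The converse is immediate: if $f\circ\varphi_k\to g$ in $L^p$, a subsequence converges a.e., and since each $f\circ\varphi_k$ takes values in $[\inf f,\sup f]$ so does $g$ a.e., placing the range of $g$ in that of $f$. I expect the main obstacle to be the tension between where $g$ forces the mass to concentrate (the partition $\{A_i\}$) and where $f$ happens to realize the prescribed values (the balls $U_i$); the device that resolves it is the splitting into a disjoint-support concentration $\alpha$ and a single ball-transport $\beta$, which is available precisely because we only need $f\circ\varphi$ to be correct off a set of small measure.
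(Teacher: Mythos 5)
The paper does not actually prove this lemma---it is imported verbatim from Kazdan--Warner \cite{KW1,KW3}---and your argument is a correct reconstruction of their original proof: uniform approximation of $g$ by a step function whose values lie in the range of $f$, followed by a diffeomorphism (your $\beta\circ\alpha$) that concentrates almost all of the measure of each level piece into a small ball on which $f$ is close to the corresponding value, with the converse obtained from a.e.\ convergence of a subsequence. The one hypothesis you add, connectedness of $M$, is genuinely needed (for disconnected $M$ a diffeomorphism cannot move mass between non-diffeomorphic components, and the lemma fails), but it is a standing assumption both in this paper and in Kazdan--Warner, so this is a correction of the statement rather than a gap in your proof.
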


\begin{remark}
The above result is trivially false for the uniform metric.
\end{remark}

 Now we can prove our first prescribing result.

\begin{proposition}\label{propimp}
Let $(M^n, g_0),$ $n\geq 3$, be a smooth compact Riemannian manifold with $Q$-curvature, $Q$, and let $f\in C^{\infty}(M).$  Assume that there is a positive constant $c$ such that
\begin{equation}\label{ineq}
\min cf<Q(x)<\max cf
\end{equation}
for all $x\in M,$ then there is a smooth metric $g$ with $Q_g=f.$
\end{proposition}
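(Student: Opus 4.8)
The plan is to reduce the prescription problem to the local surjectivity result, Theorem \ref{ITF}, by first matching $f$ to the background $Q$-curvature via the Approximation Lemma \ref{approx}, and then stripping off the resulting diffeomorphism by naturality of $Q$. Concretely, I would fix a non-$Q$-singular metric $g_1$ whose curvature $Q_1 := Q_{g_1}$ still satisfies the (non-strict) range condition $\min cf \le Q_1 \le \max cf$, let $\eta > 0$ be the radius furnished by Theorem \ref{ITF} for $g_1$, and apply Lemma \ref{approx} to the pair $(f, Q_1)$ to obtain a diffeomorphism $\varphi$ of $M$ with $\|f\circ\varphi - Q_1\|_p < \eta$. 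Theorem \ref{ITF} then produces a metric $g$ with $Q_g = f\circ\varphi$, smooth since $f\circ\varphi$ is smooth. Because the $Q$-curvature is natural, $Q_{(\varphi^{-1})^*g} = Q_g\circ\varphi^{-1} = (f\circ\varphi)\circ\varphi^{-1} = f$, so $\bar g := (\varphi^{-1})^*g$ is the desired smooth metric with $Q_{\bar g} = f$.

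It remains to construct $g_1$. If $g_0$ is itself non-$Q$-singular I would simply take $g_1 = g_0$, so that $Q_1 = Q$ and the strict inequality \eqref{ineq} yields the required containment at once. If $g_0$ is $Q$-singular, then by Theorem \ref{cgy} its $Q$-curvature is a constant $Q_0 \in (\min cf, \max cf)$; here I would perturb conformally, setting $g_1 = e^{2\varepsilon u}g_0$ for small $\varepsilon$ and a function $u$ chosen so that $Q_{g_1}$ is non-constant. Non-constancy forces $g_1$ to be non-$Q$-singular, again by Theorem \ref{cgy}, while $g_1 \to g_0$ in $C^\infty$ makes $Q_1 \to Q_0$ uniformly, so the strictness of \eqref{ineq} is exactly what propagates the range condition from $Q_0$ to $Q_1$.

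The main obstacle I expect is precisely this construction of $g_1$. The openness and density of non-$Q$-singular metrics is recorded only in the $W^{4,p}$ topology, in which $g \mapsto Q_g$ is merely $L^p$-continuous, whereas feeding the Approximation Lemma requires the pointwise ($C^0$) range condition on $Q_1$. Bridging these two topologies is what compels the use of the strict inequality together with a perturbation small in a strong enough topology (the conformal one above) to control $Q_1$ uniformly; verifying that a non-$Q$-singular perturbation with $Q_1$ uniformly near $Q$ genuinely exists — via the non-degeneracy of the conformal linearization of $Q$, governed by the Paneitz-type operator $P_g$ — is the one place that needs care. Once the range condition is secured, the remaining steps are formal.
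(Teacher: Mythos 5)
Your overall route is the same as the paper's: approximate via the Kazdan--Warner lemma, apply the local surjectivity theorem, and undo the diffeomorphism by naturality of $Q$; your discussion of the $Q$-singular case (perturb to make $Q_{g_1}$ non-constant and invoke Theorem \ref{cgy}) is in fact more explicit than the paper's one-line remark. However, there is one concrete slip: you drop the constant $c$. The hypothesis \eqref{ineq} only places the range of $Q_1$ inside the range of $cf$, not of $f$, so you cannot apply the Approximation Lemma to the pair $(f,Q_1)$ and conclude $\|f\circ\varphi-Q_1\|_p<\eta$. (The lemma as printed in the paper has a typo in its conclusion; in its correct Kazdan--Warner form the output is $\|cf\circ\varphi-Q_1\|_p<\eta$, and with $c\neq 1$ the version you invoke is simply false --- take $f\equiv 1$, $Q_1\equiv 2$, $c=2$.) Theorem \ref{ITF} therefore produces a metric $g$ with $Q_g=cf\circ\varphi$, not $f\circ\varphi$, and you must remove the constant by a homothety before pulling back: since $Q_{\lambda g}=\lambda^{-2}Q_g$ for a constant $\lambda>0$, the metric $(\varphi^{-1})^*\bigl(\sqrt{c}\,g\bigr)$ has $Q$-curvature $f$. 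This is exactly the step the paper performs. Alternatively, you could absorb $c$ at the outset by replacing $g_0$ with $\sqrt{c}\,g_0$, whose $Q$-curvature $Q/c$ lies strictly between $\min f$ and $\max f$; either way the constant has to be accounted for somewhere. With that repair your argument matches the paper's proof.
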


\begin{proof}
Assume that $(M, g_0)$ is non-$Q$-singular. By  Lemma \ref{approx}, there exists a diffeomorphism $\varphi$ such that $$\|Q-cf\circ\varphi\|_p<\varepsilon$$ for all $\varepsilon>0$ and $2p> n.$  Since  $ker L^*_g \neq \{ 0 \},$  it  follows from Theorem \ref{ITF}  that there is a metric $g_1$ with $Q_{g_1}=cf\circ\varphi.$ Since the $Q$-curvature is invariant by diffeomorphism  the metric given by $g=(\varphi^{-1})^*(\sqrt{c}g_1)$ has 	$Q$-curvature $f$ as desired.
	Otherwise, if $M$ is $Q$-singular (which implies that $Q$ is  constant) we may modify slightly $g_0$ in order to obtain a metric $\tilde g$ with non-constant $Q$-curvature still satisfying \eqref{ineq} and the result follows. 
\end{proof}

As a consequence,  we  proof the following  result that corresponds to Theorem  \ref{princ1}. 

\begin{theorem}[Theorem \ref{princ1}]
Let $(M^n, g_0)$ be a compact, Riemannian, n-manifold ($n\geq 3$) with $Q$-curvature $Q_{g_0} = Q_0,$ where $Q_0$ is a constant. If $Q_0\neq0,$ then any function f
having the same sign as $Q_0$ somewhere is the $Q$-curvature of some metric, while if
$Q_0\equiv0,$ then any function f that changes sign is the $Q$-curvature of some metric.
\end{theorem}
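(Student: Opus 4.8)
The plan is to derive Theorem \ref{princ1} as a direct consequence of Proposition \ref{propimp}, whose hypothesis is the strict two-sided inequality $\min cf < Q(x) < \max cf$ for some positive constant $c$. The entire task reduces to verifying, case by case on the sign of $Q_0$, that the sign assumptions on $f$ allow me to choose a suitable positive $c$ (and, if necessary, to perturb the background metric so that its $Q$-curvature is no longer the constant $Q_0$ but rather a genuinely non-constant function fitting the inequality).

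First I would dispose of the case $Q_0 > 0$. Here $f$ is assumed positive somewhere, so $\max f > 0$. Since $Q \equiv Q_0$ is a positive constant, I want to pick $c > 0$ with $\min cf < Q_0 < \max cf$. Because $\max f > 0$, for $c$ large enough $\max cf = c\max f$ exceeds $Q_0$; and since $\min f \le \max f$ with $\min f$ possibly negative or nonnegative, I must ensure the left inequality too. If $\min f < 0$ this is automatic for any $c>0$ on the left once the right inequality holds; if $\min f \ge 0$ then $\min cf \ge 0$ could violate strictness when $Q_0$ is pinched, so I would instead first perturb $g_0$ slightly — exactly as in the ``otherwise'' clause of the proof of Proposition \ref{propimp} — to a metric $\tilde g$ with non-constant $Q$-curvature $\tilde Q$ close to $Q_0$, making $\min \tilde Q < Q_0 < \max \tilde Q$, and then choose $c$ so that $\min c f < \min \tilde Q$ and $\max c f > \max \tilde Q$, using $\max f > 0$. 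The case $Q_0 < 0$ is symmetric, replacing ``positive somewhere'' by ``negative somewhere'' and reflecting the inequalities.

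The case $Q_0 \equiv 0$ requires a different handle, since no positive $c$ can produce $\min cf < 0 < \max cf$ unless $f$ itself takes both signs — which is precisely the hypothesis that $f$ changes sign. Here I would again perturb $g_0$ to a metric $\tilde g$ whose $Q$-curvature $\tilde Q$ is a non-constant function taking values in a small interval around $0$ and straddling it, so that $\min \tilde Q < 0 < \max \tilde Q$. Since $f$ changes sign, $\min f < 0 < \max f$, and I can then select $c > 0$ large enough that $\min cf < \min \tilde Q$ and $\max cf > \max \tilde Q$, which gives the strict two-sided inequality $\min cf < \tilde Q(x) < \max cf$ needed to invoke Proposition \ref{propimp} with background $\tilde g$.

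The main obstacle I anticipate is not the sign bookkeeping, which is elementary, but justifying the perturbation step rigorously: I must guarantee that slightly modifying $g_0$ yields a metric with \emph{non-constant} $Q$-curvature whose range is an arbitrarily small interval containing the target sign structure, while keeping the metric within the non-$Q$-singular (or at worst controllably singular) regime so that Theorem \ref{ITF} still applies through Proposition \ref{propimp}. This relies on the openness and density of non-$Q$-singular metrics noted in the remark after Theorem \ref{cgy}, together with continuity of the map $g \mapsto Q_g$; making the range of the perturbed $Q$-curvature straddle the desired value with the correct strict inequalities is the delicate point, whereas the remainder of the argument is a clean reduction to Proposition \ref{propimp}.
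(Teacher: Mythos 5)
Your reduction to Proposition \ref{propimp} is exactly the paper's proof, which simply observes that \eqref{ineq} holds for any $c>0$ when $Q_0=0$ and $f$ changes sign, and for a suitable $c>0$ when $f$ has the sign of $Q_0\neq0$ somewhere. The one caveat is that your detour through a perturbed metric $\tilde g$ with $\min\tilde Q<0<\max\tilde Q$ in the $Q_0\equiv0$ case is both unnecessary (since $\min cf<0<\max cf$ already gives \eqref{ineq} directly with $Q\equiv0$) and harder to justify than the direct application, because openness and density of non-$Q$-singular metrics does not by itself guarantee that a small perturbation makes the $Q$-curvature straddle $0$.
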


\begin{proof}
This is an immediate consequence of Proposition \ref{propimp}. Indeed, note that \eqref{ineq} is satisfied by any function $f$ having the same sign
as $Q_0$ at some point of $M$, moreover if $Q_0$ is identically zero, then \eqref{ineq} is satisfied if $f$ changes
sign on $M.$
\end{proof}

\begin{remark}
 By elliptic regularity, instead $f\in C^{\infty}(M),$ we could assume  $f\in C^{j+\alpha}(M),$ for some $j\in\mathbb{N}$ and $\alpha\in (0,1).$ Notice that such a assumption would imply that the found metric $g\in C^{j+\alpha+4}.$
\end{remark}

More recently, Lin and Yuan \cite{LY}  have shown  that non-$Q$-singular spaces  are  linearized stable, which turns to be very useful to finding solution in a given  
direction. Thus, it is possible  to prescribe some kinds of $Q$-curvature problems. To be more precise,  they proved that any smooth function can be realized as a $Q$-curvature on  non-$Q$-singular spaces with vanishing $Q$-curvature.

The problem concerning the existence of metrics of constant $Q$-curvature  in compact 4-manifolds was developed by Chang and Yang \cite{CY}, Gursky \cite{G} and Wei and Xu \cite{WX},  and more recently,  Djadli and Malchiodi \cite{DM}    provided  extensions  of these works.
In dimension four we have the following result whose  assumptions are conformally invariant and generics (see also \cite{ND}, for  dimension higher than four).
\begin{theorem}[Djadli and Malchiodi \cite{DM}]\label{DjaMalc}
 Suppose $ker P_g = \{const\}$, and assume that $\kappa_P\neq 8k\pi^2$ for $k = 1, 2, \ldots .$ Then $(M,g)$ admits a conformal metric with constant $Q$-curvature.
\end{theorem}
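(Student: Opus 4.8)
The plan is to recast the problem variationally. Under a conformal change $\tilde g = e^{2w}g$ in dimension four, the Paneitz operator and the $Q$-curvature obey the transformation law
$$
P_g w + 2 Q_g = 2\, Q_{\tilde g}\, e^{4w},
$$
so $\tilde g$ has constant $Q$-curvature exactly when $w$ solves $P_g w + 2Q_g = 2c\,e^{4w}$; integrating against $dv_g$ and using the conformal invariance of $\kappa_P$ forces $c = \kappa_P/\int_M e^{4w}\,dv_g$. Such $w$ are precisely the critical points on $W^{2,2}(M)$ of
$$
II(w) = \langle P_g w, w\rangle + 4\int_M Q_g\,w\,dv_g - \kappa_P\log\!\int_M e^{4w}\,dv_g,
$$
where $\langle P_g w,w\rangle = \int_M w\,P_g w\,dv_g$ is the Dirichlet form of $P_g$. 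The hypothesis $\ker P_g = \{\mathrm{const}\}$ ensures this quadratic form is nondegenerate on the $L^2$-complement of the constants and has only a finite-dimensional negative part, which is what keeps the variational structure usable even when $P_g$ is not positive.

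The controlling analytic input is the Adams--Moser--Trudinger inequality
$$
\log\!\int_M e^{4(w-\bar w)}\,dv_g \le C + \frac{1}{8\pi^2}\,\langle P_g w, w\rangle,
$$
with $\bar w$ the average of $w$; its sharp constant shows $II$ is coercive, and minimizable, when $\kappa_P < 8\pi^2$ and $P_g \ge 0$ (the Chang--Yang case). For $\kappa_P \in (8k\pi^2, 8(k+1)\pi^2)$ with $k \ge 1$ the functional is unbounded below and I would pass to a topological min--max. The essential refinement is an \emph{improved} Moser--Trudinger inequality: if the conformal density $e^{4w}\,dv_g/\int_M e^{4w}\,dv_g$ splits its mass among $k+1$ mutually separated regions, the effective constant improves to $(k+1)8\pi^2$. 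Hence, whenever $II(w)$ is sufficiently negative, this probability measure must concentrate near at most $k$ points of $M$.

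This concentration lets me identify the very low sublevel sets topologically. Using a continuous projection onto the space $M_k$ of formal barycenters $\sum_{i=1}^k t_i\delta_{x_i}$ of $M$, built from the center of mass of $e^{4w}\,dv_g$, together with a family of concentrating test bubbles $M_k \to \{II \le -L\}$ realizing each configuration, I would show that $\{II \le -L\}$ is homotopy equivalent to $M_k$ --- or, when $P_g$ has a nontrivial negative eigenspace $V_-$, to the topological join of $M_k$ with the unit sphere of $V_-$. This model space is non-contractible while $W^{2,2}(M)$ is contractible, so a min--max over maps of the cone on it, anchored by the nontrivial class in the low sublevel set, yields a min--max value bounded below independently of $L$, and hence a Palais--Smale sequence at a nontrivial level.

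The heart of the matter, and the main obstacle, is the failure of the Palais--Smale condition: sequences at the min--max level may blow up by bubbling, and $II$ carries no a priori compactness. I would circumvent this by a Struwe-type monotonicity argument, embedding $II = II_{\kappa_P}$ in the family $II_\rho$ obtained by replacing $\kappa_P$ with a parameter $\rho$. Monotonicity of the min--max values in $\rho$ makes them differentiable, hence supplies a \emph{bounded} Palais--Smale sequence and an exact solution $w_\rho$ of the $\rho$-equation for almost every $\rho$. Finally I would let $\rho \to \kappa_P$ and invoke a blow-up compactness theorem for the constant-$Q$ equation, in which the total $Q$-curvature lost at each concentration point is quantized in integer multiples of $8\pi^2$; the assumption $\kappa_P \neq 8k\pi^2$ then excludes blow-up and lets me extract a smooth limit solving the original equation. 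It is exactly this quantization, matched with the barycenter topology, that makes the non-critical condition on $\kappa_P$ both natural and indispensable.
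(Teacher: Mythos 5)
The paper gives no proof of this statement: it is imported verbatim from Djadli--Malchiodi \cite{DM}, and your sketch is a faithful outline of that original argument --- the functional $II$, the improved Adams--Moser--Trudinger inequality forcing concentration, the identification of low sublevels with the barycenter space $M_k$ (joined with the sphere of the negative eigenspace of $P_g$), Struwe's monotonicity trick producing bounded Palais--Smale sequences for a.e.\ $\rho$, and the $8\pi^2$ quantization of blow-up that makes $\kappa_P\neq 8k\pi^2$ decisive. So you have reconstructed essentially the same (indeed, the only known) approach, modulo a harmless normalization difference (your law $P_g w + 2Q_g = 2Q_{\tilde g}e^{4w}$ versus the paper's $P_g\varphi + Q_g = Q_{\tilde g}e^{4\varphi}$) and the caveat that each of your main steps is itself a substantial theorem carried out in \cite{DM}.
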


We obtain the following  corollary of Theorem \ref{princ1} for compact locally conformally flat (or l.c.f) manifolds  of dimension four,  whose notion is characterized by the Weyl tensor.

\begin{corollary}[Corollary \ref{corprinc}]
Let $(M^4,g)$ be a compact locally conformally 
flat $4$-manifold such that  $\ker P_g = \{const\}$ and $\kappa_P\neq 8k\pi^2$ for $k = 1, 2,\ldots.$ Then, a smooth function $f$ on $M$ is the $Q$-curvature of some metric on $M$ iff
\begin{itemize}
\item[a)] $f$ is positive somewhere, if $\chi(M)>0$;
\item[b)] $f$ changes sign or $f\equiv 0$, if $\chi(M)=0$;
\item[c)] $f$  is negative somewhere, if $\chi(M)<0.$
\end{itemize}
\end{corollary}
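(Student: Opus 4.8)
The plan is to derive Corollary \ref{corprinc} by combining Theorem \ref{princ1} with the Gauss-Bonnet formula \eqref{gb}, using crucially the hypothesis that $M$ is locally conformally flat. The local conformal flatness means the Weyl tensor vanishes identically, $W_g\equiv 0$, so \eqref{gb} collapses to the clean identity $\int_M Q_g\,dv=8\pi^2\chi(M)$. Since the total $Q$-curvature $\kappa_P$ is a conformal invariant, this integral has the \emph{same sign} as $\chi(M)$ for \emph{every} metric conformal to $g$, and this is the bridge between the topological hypotheses on $\chi(M)$ and the sign conditions on $f$.

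First I would establish the necessity direction. Suppose $f=Q_{\tilde g}$ for some metric $\tilde g$; I want $\tilde g$ conformal to $g$, which is where the hypotheses $\ker P_g=\{\mathrm{const}\}$ and $\kappa_P\neq 8k\pi^2$ enter: by Theorem \ref{DjaMalc} the conformal class of $g$ contains a constant $Q$-curvature metric, and one argues the prescribed metric may be taken in this class (or at least that the sign obstruction is conformally invariant). Integrating, $\int_M f\,dv_{\tilde g}=8\pi^2\chi(M)$. In case (a), $\chi(M)>0$ forces $\int_M f\,dv_{\tilde g}>0$, so $f$ must be positive somewhere; case (c) is symmetric; and in case (b), $\chi(M)=0$ gives $\int_M f\,dv_{\tilde g}=0$, so either $f\equiv 0$ or $f$ takes both signs, which is precisely the stated alternative.

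For sufficiency I would invoke Theorem \ref{DjaMalc} to fix a background metric $g_0$ conformal to $g$ with constant $Q$-curvature $Q_0$. The sign of $Q_0$ is determined by $\int_M Q_0\,dv=8\pi^2\chi(M)$: namely $Q_0>0$ when $\chi(M)>0$, $Q_0<0$ when $\chi(M)<0$, and $Q_0\equiv 0$ when $\chi(M)=0$. In case (a), $f$ positive somewhere has the same sign as $Q_0>0$ at some point, so Theorem \ref{princ1} produces a metric with $Q$-curvature $f$; case (c) is handled identically with $Q_0<0$; in case (b) with $Q_0\equiv 0$, a sign-changing $f$ is realized by Theorem \ref{princ1}, while $f\equiv 0$ is realized by $g_0$ itself.

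The main obstacle is the necessity argument, specifically controlling the conformal class of the \emph{prescribed} metric $\tilde g$. Theorem \ref{princ1} produces metrics that are generally not conformal to $g$, so a priori one cannot integrate against the conformally invariant $\kappa_P$. I expect to resolve this by noting that for a locally conformally flat manifold the sign of $\int_M Q_{\tilde g}\,dv_{\tilde g}$ is governed by $\chi(M)$ through \eqref{gb} for \emph{any} metric $\tilde g$ (since $W_{\tilde g}\equiv 0$ is a diffeomorphism-and-conformally-invariant condition on the underlying smooth manifold in this class), not merely those conformal to $g$; this is the delicate point that makes the topological obstruction genuinely necessary and is where I would spend the most care.
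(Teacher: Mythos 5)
Your sufficiency argument coincides with the paper's: Theorem \ref{DjaMalc} supplies a constant-$Q$-curvature metric $g_0$ conformal to $g$; since the Weyl tensor vanishes for every metric in the conformal class of $g$, formula \eqref{gb} forces the sign of the constant $Q_0$ to agree with that of $\chi(M)$ (and $Q_0=0$ when $\chi(M)=0$), after which Theorem \ref{princ1} realizes $f$. That half is correct and is essentially all the paper's one-line proof contains.

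The gap is in the necessity direction, exactly at the point you yourself flag as delicate. Your proposed resolution --- that $W_{\tilde g}\equiv 0$ is ``a diffeomorphism-and-conformally-invariant condition on the underlying smooth manifold'' --- is false. Local conformal flatness is an invariant of a \emph{conformal class} (the $(0,4)$-Weyl tensor rescales by $e^{2\varphi}$ and pulls back under diffeomorphisms), not of the smooth manifold: a $4$-manifold carrying one locally conformally flat metric typically also carries metrics with $W\neq 0$ (a generic perturbation of a flat metric on $T^4$, say). Since the metric $\tilde g$ realizing $f$ is not assumed conformal to $g$, all that \eqref{gb} yields is $\int_M f\,dv_{\tilde g}=8\pi^2\chi(M)-\tfrac{1}{4}\int_M|W_{\tilde g}|^2\,dv_{\tilde g}\le 8\pi^2\chi(M)$. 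This one-sided inequality does prove necessity in case (c) ($\chi(M)<0$ forces $\int_M f\,dv_{\tilde g}<0$, hence $f<0$ somewhere), but when $\chi(M)>0$ it does not force $f$ to be positive anywhere, and when $\chi(M)=0$ it only gives $\int_M f\,dv_{\tilde g}\le 0$, which does not exclude a nonpositive, nonvanishing $f$ that never changes sign. So cases (a) and (b) of the ``only if'' direction remain open in your argument. To be fair, the paper's own proof is equally silent on this point; the necessity is immediate from the conformal invariance of $\kappa_P$ only if the realizing metric can be taken in the conformal class of $g$, and neither you nor the paper supplies an argument for that reduction.
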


\begin{proof}
Since  the existence of metrics with constant $Q$-curvature is given by Theorem \ref{DjaMalc}, we can see that the sign condition of given functions depend on the sign  of the Euler characteristic by \eqref{gb}.  
\end{proof}

\begin{remark}
  $\kappa_P$ has an upper sharp inequality  (see \cite{G}) besides  being  multiple of the  Euler characteristic  on conformally flat structures.
\end{remark}

Next we prescribe the $Q$-curvature of open submanifolds  which  reads as follows.

\begin{theorem}[Theorem \ref{noncomp}]\label{noncompa2}
Let $M^n$ be a non-compact Riemannian manifold, $n\geq 3,$  diffeomorphic to an open submanifold of some compact  n-manifold $N$ of constant $Q$-curvature $Q_0\neq0.$ Then every  $f\in C^{\infty}(M)$ is the $Q$-curvature of   some Riemannian metric on $N.$
\end{theorem}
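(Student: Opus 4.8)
The plan is to follow the Kazdan--Warner philosophy \cite{KW1,KW3} and reduce the non-compact problem to an exhaustion by \emph{compactly supported} deformations, using the overdetermined ellipticity of $L_g^*$ that already drove the proof of Theorem \ref{ITF}. Identify $M$ with an open submanifold $U\subset N$ and take as initial metric $g_0$ the restriction to $U$ of the constant $Q$-curvature metric on $N$, so that $Q_{g_0}=Q_0$. Fix an exhaustion $\Omega_1\Subset\Omega_2\Subset\cdots$ of $M$ by relatively compact open sets with $\bigcup_k\Omega_k=M$, and write $W_k=\Omega_{k+1}\setminus\overline{\Omega_{k-1}}$ for the associated shells. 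The whole point of working with shells is that a change confined to $W_k$ leaves the metric untouched on $\overline{\Omega_{k-1}}$.

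I would then build metrics $g_k$ inductively so that (i) $Q_{g_k}=f$ on $\overline{\Omega_k}$, and (ii) $g_k=g_{k-1}$ outside $W_k$. The crucial step is the passage from $g_{k-1}$ to $g_k$ by a deformation of the form $g_k=g_{k-1}+L_{g_{k-1}}^*u$, where $u$ and sufficiently many of its derivatives vanish on $\partial W_k$, so that the perturbation extends by zero and has support in $\overline{W_k}$. After a small perturbation we may assume $g_{k-1}$ is non-$Q$-singular on the compact shell (the non-$Q$-singular metrics being dense, see the remark following Theorem \ref{cgy}); then, exactly as in the proof of Theorem \ref{ITF}, the map $u\mapsto Q_{g_{k-1}+L^*u}$ is overdetermined elliptic with elliptic and invertible linearization $L_{g_{k-1}}L_{g_{k-1}}^*$ at $u=0$. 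Solving the corresponding boundary value problem on the compact manifold-with-boundary $\overline{W_k}$ produces $u$ realizing $Q_{g_k}=f$ on the closed shell $\overline{\Omega_k}\setminus\Omega_{k-1}$; near $\partial\Omega_{k-1}$ this is consistent since $Q_{g_{k-1}}=f$ already holds there, while on the outer part $\Omega_{k+1}\setminus\Omega_k$ the $Q$-curvature is left free (this region is overwritten at the next stage).

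Finally I would pass to the limit. Since the modification at stage $k$ is supported in $W_k$ and $W_k\cap\Omega_{k-1}=\varnothing$, for every fixed $m$ one has $W_k\cap\Omega_m=\varnothing$ as soon as $k\ge m+1$; hence $g_k=g_m$ on $\overline{\Omega_m}$ for all $k\ge m+1$ and the sequence stabilizes on each compact piece. Therefore $g:=\lim_k g_k$ is a well-defined smooth Riemannian metric on $M$, and by (i) it satisfies $Q_g=f$ on every $\Omega_m$, hence on all of $M$. The corollary for Euclidean space follows by taking $N=S^n$ with its round metric, which has constant positive $Q$-curvature, and $U=S^n\setminus\{p\}\cong\mathbb{R}^n$.

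The main obstacle will be the inductive deformation step. Theorem \ref{ITF} only guarantees a solution when $f-Q_{g_{k-1}}$ is small in $L^p$, whereas on $W_k$ this difference can be large. I expect to circumvent this by a continuity argument \emph{within} each stage: interpolate between $Q_{g_{k-1}}$ and $f$ through finitely many functions differing by small $L^p$ increments and apply the boundary-value version of the implicit function theorem successively, keeping all supports inside $\overline{W_k}$. One must simultaneously verify that the intermediate tensors remain positive-definite metrics, that non-$Q$-singularity is preserved along the way, and that the vanishing Cauchy data on $\partial W_k$ are compatible with the overdetermined-elliptic solvability so that each deformation genuinely extends by zero; these are the points where the argument requires the most care.
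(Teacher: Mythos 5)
There is a genuine gap, and it sits exactly where you flag ``the main obstacle'': the inductive deformation step on a shell cannot be carried out with the tools available. Theorem \ref{ITF} is a closed-manifold statement: invertibility of $L_{g}L_{g}^*$ comes from integration by parts over all of $N$ and the Berger--Ebin splitting, and it only produces a solution when the target is $L^p$-close to the current $Q$-curvature. On the compact shell $\overline{W_k}$ you are asking for much more: an eighth-order elliptic equation for $u$ together with the requirement that $u$ vanish to high (in fact infinite, if $g_k$ is to be smooth) order on $\partial W_k$ so that $L^*u$ extends by zero. An eighth-order elliptic boundary value problem admits four boundary conditions, not infinitely many; prescribing full vanishing Cauchy data makes the problem overdetermined and generically unsolvable, and nothing in the paper (or in your sketch) supplies the compatibility conditions. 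On top of that, the continuity/interpolation device you propose to handle the case where $f-Q_{g_{k-1}}$ is large in $L^p$ does not close: the radius $\eta$ in the implicit function theorem depends on the metric at each sub-step, and without uniform control it can shrink to zero before you reach $f$. So both halves of the inductive step are unproven, and they are the substance of the theorem, not technicalities.

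The paper avoids all of this by working globally on the compact manifold $N$ and never leaving the setting of Theorem \ref{ITF}. One extends $f$ from $M$ to a smooth function on $N$, using the open set in $N\setminus M$ to arrange that the constant $Q_0\neq 0$ lies strictly inside the range of $c\tilde f$ for a suitable $c>0$; the Approximation Lemma \ref{approx} then produces a diffeomorphism $\varphi$ of $N$ with $\|c\tilde f\circ\varphi-Q_0\|_p<\eta$, Theorem \ref{ITF} (applied on the closed manifold $N$, after perturbing to a non-$Q$-singular metric if necessary) gives a metric $g_1$ on $N$ with $Q_{g_1}=c\tilde f\circ\varphi$, and pulling back by $\varphi^{-1}$ and rescaling yields a metric on $N$ whose $Q$-curvature restricts to $f$ on $M$. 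The largeness of $f-Q_0$ is absorbed entirely by the measure-rearranging diffeomorphism rather than by a continuity method, which is precisely the Kazdan--Warner idea your exhaustion scheme bypasses. If you want to salvage a localization argument, you would need a genuinely local solvability result for the $Q$-curvature operator with compactly supported deformations, which is a much harder statement and is not established here.
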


\begin{remark}
For surfaces, a version of Theorem \ref{noncompa2} was proved by Kazdan and Warner  \cite{KW4} using conformal deformation methods (different from our proof in several steps). 
Recall that on surfaces the $Q$-curvature  is essentially the Gaussian curvature. By Bonnet-Myers theorem and completeness, if the sign of $f$ were positive, then we would have compactness. Hence we conclude that 
one cannot always hope to achieve a complete metric which has a given $f.$
\end{remark}

\begin{proof}[Proof of Theorem \ref{noncompa2}] 

Assume with no loss of generality that $N\setminus M$ contains an open set and that $M$ and $N$ are connected. 
Now, extend $Q$ to $N$ by defining it to be identically equal to $Q_0$ on $N\setminus M.$ By  Approximation Lemma \ref{approx},  there exists a diffeomorphism $\varphi$ on $N$  such that 
$$
\|f\circ\varphi-Q_0\|_p<\varepsilon,
$$
where $2p> n.$ 
Since $\varphi^{-1}(M)\subset N,$ by Theorem \ref{ITF}  there is a metric $g_1$ with $$Q_{g_1}=f\circ\varphi.$$ Hence $f$ is a curvature of the pulled-back metric $(\varphi^{-1})^*(g_1)$ on $M$ and the result follows.
\end{proof}

An immediate and interesting consequence of the above theorem is the following.

\begin{corollary}
Any $Q\in C^{\infty}(\mathbb{R}^n),\;n\geq 3,$   is the $Q$-curvature of some Riemannian metric on $\mathbb{R}^n.$ 
\end{corollary}

\section{prescribing 4-forms}\label{4-forms}

Given a Riemannian manifold $(M^{n},g)$ let us recall some basic facts concerning its Riemannian geometry. The first one deals with the classical decomposition of the Riemannian curvature tensor with respect to the Hilbert-Schmidt inner product
\begin{equation}
\label{eqndecomp}
\mbox{Riem}_g=\frac{R_g}{2n(n-1)}g\odot g+\frac{1}{n-2}\Big(Ric_g-\frac{R_g}{n}g\Big)\odot g+W_g,
\end{equation}
where  $\odot$ stands for the Kulkarni-Nomizu product of symmetric bilinear forms.

In 4 dimension, consider the following curvature $4$-form 
\begin{equation}\label{eqform}
\Omega_g=\left(Q_g+\frac{1}{4}|W_g|^2
\right)dvol_g.  
\end{equation}
Observe its close relation
to the Pfaffian, defined as $$\mbox{Pfaff}_g=\left(16 Q_g +4W^{abcd}W_ {abcd}-\frac{8}{3}\Delta_g R_g\right)dvol_g.$$  

 Using the  Gauss-Bonnet-Chern  formula and \eqref{eqndecomp} we have that
\begin{eqnarray*}
32\pi^{2}\chi(M)&=&\int_{M}\Big(|\mbox{Riem}_g|_g^{2}-4|Ric_g|_g^2+R_g^2\Big)dvol_g\\
&=&\int_{M}(4Q_g+|W_g|^2)dvol_g\\
&=&4\int_{M}\Omega_g.
\end{eqnarray*}

Next, we show that given a 4-form $\omega$ satisfying $$\int_M\omega=8\pi^2\chi(M),$$  we  find  a metric $\tilde g$ that satisfies $\omega=\Omega_{\tilde{g}}.$
This  new metric is obtained  pointwise conformal to $g.$

In order to proceed we need some preliminaries definitions. 
Recall that in four dimension, the {\it Paneitz-operator}  is a 4-th order differential operator defined by 
 $$P_gu=\Delta^2_gu+div_g \left(\frac{2}{3}R_gg -2 Ric_g\right)du,$$
where $d$ is the differential (acting on functions).
$P_g$ it is conformally invariant. Indeed, performing the conformal change of metric $\tilde{g}=e^{2\varphi}g$ we  get that 
$
P_{\tilde{g}}=e^{-4\varphi}P_g.
$
In this sense,  the transformation law  by conformal metric of the Paneitz operator represents an analogue of the Laplace-Beltrami operator.  
Moreover, it is well known that, as well as the $Q$-curvature,  $P_g$ is natural, that is, 
$\phi^*P_g = P_{\phi^*g}$ for all smooth diffeomorphism $\phi: M\to M,$ and self-adjoint with respect to the $L^2$-scalar product (see e.g. \cite{R},\cite{GZ},\cite{FG}). 

Now we are in position to prescribe curvature 4-forms in dimension four.

\begin{theorem}[Theoorem \ref{curvform}]
Let $(M,g)$ be a compact, connected, orientable Riemannian 4-manifold such that  $\ker P_g = \{const\}.$  Given any $4$-form $\omega$  
that satisfies $$
\int_{M}\omega=8\pi^2\chi(M),
$$
then there exist a metric pointwise conformal to $g$ such that $\omega$  is a curvature 4-form.
\end{theorem}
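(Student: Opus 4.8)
The plan is to reduce the prescription of the curvature $4$-form $\omega$ to the scalar prescription problem already solved in this paper, by exploiting the conformal invariance of the quantities involved. The key observation is that the total quantity $\int_M\Omega_g$ is, by the Gauss-Bonnet-Chern computation displayed above, a topological invariant equal to $8\pi^2\chi(M)$, so any candidate form $\omega$ with the correct integral is numerically compatible. Moreover, under a pointwise conformal change $\tilde g=e^{2\varphi}g$ the Weyl tensor transforms in a controlled way: the quantity $|W_g|^2\,dvol_g$ is pointwise conformally invariant (since $W$ has conformal weight that exactly cancels the volume scaling in dimension four). Hence $\Omega_{\tilde g}-\tfrac14|W_{\tilde g}|^2\,dvol_{\tilde g}=Q_{\tilde g}\,dvol_{\tilde g}$, and the entire problem of matching $\omega$ reduces to prescribing the scalar density $Q_{\tilde g}\,dvol_{\tilde g}$.

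First I would write $\omega$ in terms of a background volume form, say $\omega=\psi\,dvol_g$ for a smooth function $\psi$, and set $W$ aside using the conformal invariance just noted, so that the target becomes finding $\varphi$ with
\begin{equation}\label{eqred}
Q_{\tilde g}\,dvol_{\tilde g}=\Big(\psi-\tfrac14|W_g|^2\Big)dvol_g.
\end{equation}
Second, using the conformal transformation law of the Paneitz operator, $Q_{\tilde g}e^{4\varphi}=Q_g+\tfrac12 P_g(2\varphi)$ in the appropriate normalization, together with $dvol_{\tilde g}=e^{4\varphi}dvol_g$, equation \eqref{eqred} becomes the fourth-order scalar equation
$$
P_g(2\varphi)=2\Big(\psi-\tfrac14|W_g|^2\Big)-2Q_g
$$
for the unknown $\varphi$. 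Third, I would solve this linear equation using the hypothesis $\ker P_g=\{const\}$: since $P_g$ is self-adjoint, its cokernel is also the constants, so the equation is solvable precisely when the right-hand side integrates to zero against the constants, i.e. when $\int_M\big(\psi-\tfrac14|W_g|^2-Q_g\big)dvol_g=0$. That integral compatibility is exactly guaranteed by the two identities $\int_M\omega=8\pi^2\chi(M)$ and $\int_M\Omega_g=8\pi^2\chi(M)$, which force $\int_M\psi\,dvol_g=\int_M(Q_g+\tfrac14|W_g|^2)dvol_g$.

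The main obstacle I anticipate is the bookkeeping around the conformal transformation laws and making the reduction \eqref{eqred} fully rigorous: I must verify carefully that $|W|^2\,dvol$ really is pointwise (not merely integrally) conformally invariant in dimension four, so that the Weyl contribution to $\omega$ can be absorbed into the background and does not introduce its own unknown $\varphi$-dependence. One must also confirm that $\psi-\tfrac14|W_g|^2$ is genuinely the density we are free to prescribe and that the residual term is smooth. Once that pointwise invariance is in hand, the remaining analysis is the standard Fredholm solvability of the linear elliptic equation $P_g\varphi=F$ under $\ker P_g=\{const\}$, where the solvability condition matches the topological constraint automatically; the desired metric is then $\tilde g=e^{2\varphi}g$, and elliptic regularity upgrades $\varphi$ to a smooth function, completing the proof.
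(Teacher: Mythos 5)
Your proposal is correct and follows essentially the same route as the paper: both arguments use the pointwise conformal invariance of $|W|^2\,dvol$ in dimension four to isolate the $Q$-curvature density, reduce the problem to the linear equation $P_g\varphi = *(\omega-\Omega_g)$ via the conformal transformation law of the Paneitz operator, and solve it by the Fredholm alternative for the self-adjoint operator $P_g$ with $\ker P_g=\{const\}$, the compatibility condition being exactly $\int_M\omega=\int_M\Omega_g=8\pi^2\chi(M)$. The only differences are notational (your normalization $P_g(2\varphi)$ versus the paper's $P_g\varphi$).
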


\begin{proof} 
The proof consists in seeking a metric $\tilde g=e^{2\varphi}g,$ or more precisely  a function $\varphi,$ in order to realize a given $\omega$ as $\Omega_{\tilde{g}}.$ First we recall that for   pointwise conformal metrics one has 
\begin{equation}\label{conf}
P_{g}\varphi + Q_{g} = Q_{\tilde{g}}e^{4\varphi},
\end{equation}
Thus
\begin{eqnarray}
\Omega_{\tilde{g}}&=&\left(Q_{\tilde{g}}+\frac{1}{4}|W_{\tilde{g}}|^2\right)dvol_{\tilde{g}}\\
&=&\Omega_g+P_g\varphi\; dvol_{g},\nonumber
\end{eqnarray}
where we have used that 
$$dvol_{\tilde{g}}=e^{-4\varphi}dvol_{g}\quad\mbox{and}\quad e^{4\varphi}\tilde{g}(W_{\tilde{g}},W_{\tilde{g}})=g(W_{g},W_{g}).$$

Thus, solve the linear equation 
\begin{equation}\label{solve}
\Omega_{\tilde{g}}-\Omega_g=P_g\varphi\; dvol_{g},
\end{equation}
 for some $\varphi,$ is equivalent to realize the 4-form $\omega$ as $\Omega_{\tilde{g}}.$  Moreover, observe that \eqref{solve} can be rewritten as
\begin{equation}\label{star}
P_g\varphi =*(\Omega_{\tilde{g}}-\Omega_g),
\end{equation}
  where $*$ stands for the Hodge star operation with respect to $g.$ 

Taking into account that $\int_M(\Omega_{\tilde{g}}-\Omega_g)=0$ and that $P_g$ is self-adjoint with $\ker P_g = \{const\}.$ It follows from elliptic theory that \eqref{star} has a unique solution up to an additive constant.
\end{proof}

\begin{remark}
 Although, $P_g$ and $\kappa_P$ are  conformal invariant objects on four manifolds, they provide some interesting  geometric information. Indeed, if a manifold of non-negative Yamabe invariant $Y(g)$ satisfies also $\kappa_P\geq0,$ then $\mbox{ker } P_g$ consists only of the constant functions and $P_g$ is a non-negative operator. Thus,  instead of assuming $P_g$ with trivial kernel,  one may suppose $ Y(g)\geq0$ and  $\kappa_P\geq0.$
\end{remark}

Graham-Jenne-Mason-Sparling  \cite{GJMS} have defined  a family of conformally invariant
 operators $P_{k,g}$ (in odd dimensions, $k$ is any positive integer, while in dimension $n$ even, $k$ is a positive integer no more than $\frac{n}{2}$), whose leading term is $\Delta^n_g,$ that are high-order analogues to the Laplace-Beltrami operator and to the Paneitz
operator for high dimensional compact manifolds. As the case treated here, these operators, the so-called of  GJMS operators,  have  associated curvature invariants $Q_{k,g}$. For more detail, see   \cite{GJMS}, \cite{CGJP},\cite{ND} and \cite{R}. Furthermore,  it was proved in \cite{BGP} that given a closed locally conformally flat manifold $(M, g)$ of even dimension $n$,  we have that
$$
C_n\int_MQ_{k,g}dV=\chi(M),
$$
where $C_n=\frac{1}{((n-2)!!)|\mathbb{S}^{n-1}|}$ ($|\mathbb{S}^{n-1}|$ denotes the volume of the standard (n-1)-sphere of radius $1$). Hence, the methods of Theorem \ref{curvform} apply equally well, with minor modifications, in order to prescribe curvature n-forms of locally conformally falt manifolds using $Q_{k,g}$and $P_{k,g}.$




\bibliographystyle{amsplain}

\end{document}